\begin{document}
\setcounter{page}{1}
\newtheorem{theorem}{Theorem}[section]
\newtheorem{lemma}[theorem]{Lemma}
\newtheorem{corollary}[theorem]{Corollary}

\vspace*{1.0cm}

\title[Operator preserving inequalities between polynomials] {On an Operator Preserving Inequalities
between Polynomials}
\author{  N. A. Rather, Suhail Gulzar   }
\date{}
\maketitle

\vspace*{-0.2cm}

\begin{center}
{\footnotesize  Department of Mathematics, University of Kashmir, Srinagar 190006, India\\
  \texttt{dr.narather@gmail.com, sgmattoo@gmail.com}

 }
\end{center}
\vskip 2mm

{\footnotesize \noindent {\bf Abstract.}   Let $\mathscr{P}_n $ denote the space of all complex polynomials $P(z)=\sum_{j=0}^{n}a_{j}{z}^{j}$ of degree   $n$ and $\mathcal{B}_n$ a family of operators that maps $\mathscr{P}_n$ into itself. In this paper, we consider a problem of investigating the dependence of
$$\left |B[P\circ\sigma](z)-\alpha B[P\circ\rho](z)+\beta\left\{\left(\frac{R+k}{k+r}\right)^{n}-|\alpha|\right\}B[P\circ\rho](z)\right| $$
on the maximum and minimum modulus of $|P(z)|$ on $|z|=k$ for arbitrary real or complex numbers $\alpha,\beta\in\mathbb{C}$ with $|\alpha|\leq 1,|\beta|\leq 1,R>r\geq k,$ $\sigma(z)=Rz,$ $\rho(z)=rz$ and establish certain sharp operator preserving inequalities between polynomials, from which a variety of interesting results follow as special cases. \vskip 1mm

\noindent {\bf Keywords}:  Polynomials; Inequalities in the complex domain; $\mathcal{B}_n$-operator.\vskip 1mm

\noindent {\bf 2000 AMS Subject Classification:}  30A10; 30D15; 41A17}
  \vskip 6mm

\vskip 6mm
\noindent\section*{\bf\large 1. Introduction}
\vskip 6mm

\hspace{4mm} Let $\mathscr{P}_n $ denote the space of all complex polynomials $P(z)=\sum_{j=0}^{n}a_{j}{z}^{j}$ of degree   $n$. 
A famous result known as Bernstein's inequality (for reference, see \cite[p.531]{9}, \cite[p.508]{rs} or \cite{11} states that if $P\in \mathscr{P}_n$, then
\begin{equation}\label{eq1}
\underset{\left|z\right|=1}{Max}\left|P^{\prime}(z)\right|\leq  n\underset{\left|z\right|=1}{Max}\left|P(z)\right|,
\end{equation} 
whereas concerning the maximum modulus of $P(z)$ on the circle $\left|z\right|=R>1$, we have
\begin{equation}\label{eq2}
\underset{\left|z\right|=R}{Max}\left|P(z)\right|\leq  R^{n}\underset{\left|z\right|=1}{Max}\left|P(z)\right|,\,\,\, R\geq 1.
\end{equation}
(for reference, see \cite[p.442]{8} or \cite[vol.I, p.137]{9} ).\\
 \indent If we restrict ourselves to the class of polynomials $P\in \mathscr{P}_n$ having no zero in $|z|<1$, then inequalities \eqref{eq1} and \eqref{eq2} can be respectively replaced by
\begin{equation}\label{eq3}
\underset{\left|z\right|=1}{Max}\left|P^{\prime}(z)\right|\leq        \frac{n}{2}\underset{\left|z\right|=1}{Max}\left|P(z)\right|,
\end{equation}
and
\begin{equation}\label{eq4}
\underset{\left|z\right|=R}{Max}\left|P(z)\right|\leq \frac{R^{n}+1}{2}\underset{\left|z\right|=1}{Max}\left|P(z)\right|,\,\,\, R\geq 1.
\end{equation}
Inequality \eqref{eq3} was conjectured by Erd\"{o}s and later verified by Lax \cite{7}, whereas inequality \eqref{eq4} is due to Ankey and Ravilin \cite{1}. Aziz and Dawood \cite{2} further improved inequalities \eqref{eq3} and \eqref{eq4} under the same hypothesis and proved that, 
\begin{equation}\label{eq5}
\underset{\left|z\right|=1}{Max}\left|P^{\prime}(z)\right|\leq \frac{n}{2}\left\{\underset{\left|z\right|=1}{Max}\left|P(z)\right|-\underset{\left|z\right|=1}{Min}\left|P(z)\right|\right\},
\end{equation}
\begin{equation}\label{eq6}
\underset{\left|z\right|=R}{Max}\left|P(z)\right|\leq \frac{R^{n}+1}{2}\underset{\left|z\right|=1}{Max}\left|P(z)\right|- \frac{R^{n}-1}{2}\underset{\left|z\right|=1}{Min}\left|P(z)\right|,\,\,\, R\geq 1.
\end{equation}
\indent As a compact generalization of Inequalities \eqref{eq1} and \eqref{eq2}, Aziz and Rather \cite{ar} have shown that if $ P\in\mathscr{P}_n $  then for $ \alpha,\beta\in\mathbb{C} $ with $ |\alpha|\leq 1,$ $|\beta|\leq 1,$ $ R > 1 $ and $ |z| \geq 1, $
\begin{align}\nonumber\label{e7}
  \bigg|P(Rz)-\alpha P(z)&+\beta\left\{\left(\frac{R+1}{2}\right)^{n}-|\alpha|\right\}P(z)\bigg|\\
  &\leq|z|^{n}\left|R^{n}-\alpha+\beta\left\{\left(\frac{R+1}{2}\right)^{n}-|\alpha|\right\}\right|\underset{\left|z\right|=1}{Max}\left|P(z)\right|.
  \end{align}
  The result is sharp and equality in \eqref{e7} holds for the polynomial $P(z)=az^n,$ $a\neq 0.$\\
 \indent As a corresponding compact generalization of Inequalities \eqref{eq3} and \eqref{eq4}, they \cite{ar} have also shown that if $ P\in\mathscr{P}_n $ and $ P(z) $ does not vanish in $ |z|<1,$ then for all $ \alpha,\beta\in \mathbb{C} $ with $ |\alpha|\leq 1,|\beta|\leq 1 ,$ $ R>1 $ and $|z|\geq 1,$
  \begin{align}\label{e8}\nonumber
    \Bigg|P(Rz)-\alpha P(z)+\beta&\left\{\left(\frac{R+1}{2}\right)^{n}-|\alpha|\right\}P(z)\Bigg|\\\nonumber
   \leq\frac{1}{2}&\Bigg[\left|R^{n}-\alpha +\beta\left\{\left(\frac{R+1}{2}\right)^{n}-|\alpha|\right\}\right||z|^n\\&+\left|1-\alpha +\beta\left\{\left(\frac{R+1}{2}\right)^{n}-|\alpha|\right\}\right|\Bigg]\underset{\left|z\right|=1}{Max}\left|P(z)\right|.
  \end{align}
  The result is best possible and equality in \eqref{e8} holds for $P(z)=az^n+b,$ $|a|=|b|.$\\
\indent Q. I. Rahman \cite{qir} (see also Rahman and Schmeisser \cite[p. 538]{rs}) introduced a class $\mathcal{B}_n$ of operators $B$ that carries a polynomial $P\in\mathscr{P}_n$ into 
  \begin{equation}\label{BO}
 B[P](z)=\lambda_0P(z)+\lambda_1\left(\dfrac{nz}{2}\right)\dfrac{P^{\prime}(z)}{1!}+\lambda_2\left(\dfrac{nz}{2}\right)^2\dfrac{P^{\prime\prime}(z)}{2!},
  \end{equation}\label{BO'}
  where $\lambda_0,\lambda_1$ and $\lambda_2$ are such that all the zeros of
  \begin{equation}
  U(z)=\lambda_0+n\lambda_1z+\frac{n(n-1)}{2}\lambda_2z^2
  \end{equation} 
  lie in half plane $|z|\leq\left|z-n/2\right|.$\\
\indent As a generalization of the inequalities \eqref{eq1} and \eqref{eq3}, Q. I. Rahman \cite[inequalities 5.2 and 5.3]{qir} proved that if $P\in\mathscr{P}_n,$ then
  \begin{equation}\label{qe1}
 | B[P](z)|\leq |B[z^n]|\underset{|z|=1}{Max}|P(z)|,\,\,\,\,\,\,\textnormal{for}\,\,\,\,\,\,\,|z|\geq 1,
  \end{equation}
  and if $P\in\mathscr{P}_n,$ $P(z)\neq 0$ in $|z|<1,$ then
  \begin{equation}\label{qe2}
 | B[P](z)|\leq\dfrac{1}{2} \left\{|B[z^n]|+|\lambda_0|\right\}\underset{|z|=1}{Max}|P(z)|,\,\,\,\,\,\,\textnormal{for}\,\,\,\,\,\,\,|z|\geq 1,
  \end{equation}
  where $B\in\mathcal{B}_n.$\\
In this paper, we denote for any complex functions $P,\,\rho : \mathbb C\rightarrow \mathbb C$ the composite function of  
$P$ and $\rho$, defined by
$\left(P\circ\rho\right)(z)=P\left(\rho(z)\right)\,\,(z\in\mathbb C),$  as  $P\circ\rho$.

\vskip 6mm
\section{\bf\large  Preliminaries}
\vskip 6mm

For the proof of our results, we need the following Lemmas.\\
\begin{lemma}\label{l1}
 If $P\in\mathscr{P}_n $ and $ P(z) $ have  all its zeros in $\left|z\right|\leq k$ where $k\geq 0$, then for every $R\geq r,$ $Rr\geq k^2$ and $\left|z\right|=1$, we have
\begin{equation*}
\left|P(Rz)\right|\geq \left(\frac{R+k}{r+k}\right)^{n}\left|P(rz)\right|.
\end{equation*}
\end{lemma}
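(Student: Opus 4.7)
The plan is to prove the inequality factor-by-factor over the zeros of $P$. Writing $P(z)=a_n\prod_{j=1}^{n}(z-z_j)$ with $|z_j|\le k$, the desired bound on the ratio $|P(Rz)|/|P(rz)|$ for $|z|=1$ reduces, by multiplicativity, to the per-factor claim
\begin{equation*}
\frac{|Rz-z_j|}{|rz-z_j|}\ge \frac{R+k}{r+k}\qquad(|z|=1,\ |z_j|\le k,\ R\ge r,\ Rr\ge k^{2}).
\end{equation*}
If this holds for each $j$, multiplying over $j=1,\dots,n$ yields the lemma immediately, so the whole burden is shifted to this one two-variable inequality.

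To establish the per-factor claim, I would square both sides and compare. Setting $s=|z_j|\le k$ and $u=\mathrm{Re}(\bar z z_j)\in[-s,s]$ gives $|Rz-z_j|^{2}=R^{2}+s^{2}-2Ru$ and similarly for the $r$-version. The quantity
\begin{equation*}
\Delta:=(r+k)^{2}|Rz-z_j|^{2}-(R+k)^{2}|rz-z_j|^{2}
\end{equation*}
is a polynomial in $R,r,k,s,u$, and expanding shows that the three pieces
$R^{2}(r+k)^{2}-r^{2}(R+k)^{2}$, $s^{2}[(r+k)^{2}-(R+k)^{2}]$ and $-2u[R(r+k)^{2}-r(R+k)^{2}]$ each carry a factor $(R-r)$; after pulling it out one obtains
\begin{equation*}
\Delta=(R-r)\bigl[\,2kRr+k^{2}(R+r)-s^{2}(R+r+2k)+2u(Rr-k^{2})\,\bigr].
\end{equation*}
Here the hypothesis $Rr\ge k^{2}$ enters decisively: it makes the coefficient of $u$ nonnegative, so the bracket is minimized over $u\in[-s,s]$ at $u=-s$.

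Substituting $u=-s$ and regrouping terms as $2Rr(k-s)+(R+r)(k^{2}-s^{2})+2sk(k-s)=(k-s)[\,2Rr+(R+r)(k+s)+2sk\,]$ shows that the minimum value is a product of two manifestly nonnegative quantities, since $s=|z_j|\le k$. Hence $\Delta\ge 0$, which is exactly the per-factor inequality, and the lemma follows on taking the product. The main obstacle is purely bookkeeping: making sure the factor $(R-r)$ is extracted cleanly so that the residual bracket is linear in $u$, and then invoking $Rr\ge k^{2}$ at precisely the right moment to pick the correct endpoint $u=-s$; without $Rr\ge k^{2}$ the sign of the coefficient of $u$ flips and the estimate collapses.
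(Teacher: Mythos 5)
Your argument is correct. Note that the paper itself gives no proof of this lemma---it simply quotes the result from Aziz and Zargar \cite{az}---so there is nothing to compare against line by line; but your per-factor reduction (writing $P(z)=a_n\prod_j(z-z_j)$ and proving $|Rz-z_j|/|rz-z_j|\ge (R+k)/(r+k)$ for each zero) is the standard route to this inequality, and your algebra checks out: the factor $(R-r)$ extracts as you claim, the hypothesis $Rr\ge k^2$ correctly forces the minimum of the bracket at $u=-s$, and the resulting value $(k-s)\bigl[2Rr+(R+r)(k+s)+2ks\bigr]$ is visibly nonnegative since $s=|z_j|\le k$. The only point worth a sentence in a written-up version is the degenerate case $P(rz_0)=0$ for some $|z_0|=1$, where the ratio form of the per-factor claim is undefined but the lemma's inequality holds trivially.
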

The above is due to Aziz and Zargar \cite{az}. The next lemma follows from Corollary $18.3$ of \cite[p. 86]{mm}.
\begin{lemma}
If $ P\in\mathscr{P}_n $ and $P(z)$ has all zeros in $|z|\leq k,$ where $k>0$ then all the zeros of $B[P](z)$ also lie in $|z|\leq k.$ 
\end{lemma}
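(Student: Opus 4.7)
The plan is to derive this from the Grace--Szeg\H{o} composition theorem, which is the engine behind Corollary 18.3 of \cite{mm}. The first step is a normalization: since $B\bigl[P(k\,\cdot)\bigr](z)=B[P](kz)$, it suffices to treat $k=1$, so I assume $P$ has all its zeros in $|z|\le 1$ and must show the same for $B[P]$.

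Next, a direct computation yields $B[P](z)=\sum_{j=0}^{n}\phi(j)a_{j}z^{j}$ with $\phi(j)=\lambda_{0}+\tfrac{nj}{2}\lambda_{1}+\tfrac{n^{2}j(j-1)}{8}\lambda_{2}$. I introduce the associated symbol polynomial
$$\Phi(z)=\sum_{j=0}^{n}\binom{n}{j}\phi(j)z^{j},$$
so that $B[P]$ is precisely the Schur--Szeg\H{o} composition of $P$ and $\Phi$. By the classical Szeg\H{o} composition theorem, every zero $w$ of $B[P]$ has the form $w=-\alpha\beta$ with $\alpha$ a zero of $P$ and $\beta$ a zero of $\Phi$. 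Since $|\alpha|\le 1$ by hypothesis, the lemma reduces to showing that every zero of $\Phi$ also lies in $|z|\le 1$.

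The heart of the argument is locating the zeros of $\Phi$. Summing the three binomial series $\sum_{j}\binom{n}{j}z^{j}$, $\sum_{j}\binom{n}{j}jz^{j}$, and $\sum_{j}\binom{n}{j}j(j-1)z^{j}$ produces the factorization
$$\Phi(z)=(1+z)^{n-2}V(z),\qquad V(w)=(1+w)^{2}\,U\!\left(\frac{nw/2}{1+w}\right),$$
so the zeros of $\Phi$ are $z=-1$ (of multiplicity $n-2$) together with points $w_{0}$ satisfying $U\bigl(nw_{0}/(2(1+w_{0}))\bigr)=0$. The M\"obius map $T(w)=\tfrac{nw/2}{1+w}$, with inverse $T^{-1}(\zeta)=2\zeta/(n-2\zeta)$, carries the closed disk $|w|\le 1$ bijectively onto the closed half-plane $\operatorname{Re}\zeta\le n/4$, which is precisely the region $|\zeta|\le|\zeta-n/2|$ in which all zeros of $U$ lie by hypothesis. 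Pulling back through $T^{-1}$ therefore places every such $w_{0}$, and hence every zero of $\Phi$, in $|w|\le 1$.

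The principal obstacle is the bookkeeping of this third step: executing the binomial sums cleanly, recognizing the quadratic factor as $V(w)=(1+w)^{2}U(T(w))$, and verifying that $T$ is indeed the M\"obius conjugation between the hypothesis half-plane and the unit disk. Once these identifications are secured, the Szeg\H{o} composition theorem delivers $|w|=|\alpha|\,|\beta|\le 1$, so $B[P]$ has all of its zeros in $|z|\le 1$, as required.
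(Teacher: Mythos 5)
Your argument is correct, and it is in substance the proof that the paper leaves implicit: the authors dispatch this lemma with a bare citation to Corollary 18.3 of Marden's \emph{Geometry of Polynomials}, and what you have written is essentially the derivation of that corollary (equivalently, of Rahman's original justification for the definition of the class $\mathcal{B}_n$) from the Grace--Szeg\H{o} composition theorem. Your computations check out: $B[P](z)=\sum_{j}\phi(j)a_{j}z^{j}$ with $\phi(j)=\lambda_{0}+\tfrac{nj}{2}\lambda_{1}+\tfrac{n^{2}j(j-1)}{8}\lambda_{2}$; the binomial identities give
\[
\Phi(z)=\lambda_{0}(1+z)^{n}+\tfrac{n^{2}\lambda_{1}}{2}z(1+z)^{n-1}+\tfrac{n^{3}(n-1)\lambda_{2}}{8}z^{2}(1+z)^{n-2}=(1+z)^{n-2}\cdot(1+z)^{2}\,U\!\left(\tfrac{nz/2}{1+z}\right);
\]
and $\left|T^{-1}(\zeta)\right|=|\zeta|/|\zeta-n/2|\le 1$ holds exactly on the half-plane $|\zeta|\le|\zeta-n/2|$ prescribed for the zeros of $U$, so every zero of $\Phi$ lies in $|z|\le 1$ and Szeg\H{o}'s theorem, together with your (valid) scaling reduction $B[P(k\,\cdot)](z)=B[P](kz)$, finishes the job. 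What your route buys is a self-contained proof where the paper offers only a pointer. Three small points to tidy in a final write-up: the factor $(1+z)^{n-2}$ must be read with the convention that the $\lambda_{2}$-term is absent for $n=1$, so no negative power actually occurs; if $\phi(n)=0$ then $\Phi$ has degree less than $n$ and one should quote the version of the composition theorem that accounts for zeros at infinity (harmless here, since $P$ has exact degree $n$ and its zeros lie in a bounded circular region); and $T(-1)=\infty$, so ``bijectively onto the closed half-plane'' should be understood on the Riemann sphere --- only the pullback direction is used, so nothing is affected.
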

 \begin{lemma}\label{l3}
 If $ P\in\mathscr{P}_n $ and $ P(z) $ have no zero in $\left|z\right|<k,$ where $ k>0,$ then for all $ \alpha,\beta\in\mathbb{C} $ with $ |\alpha|\leq 1,$  $|\beta|\leq 1 $ , $ R>r\geq k $ and $ |z| \geq 1 $,
   \begin{align}\label{le3}\nonumber
       \big|B[P\circ\sigma](z)+&\Phi_k(R,r,\alpha,\beta)B[P\circ\rho](z)\big|\\
       &\leq k^n \left|B[Q\circ\tau](z)]+\Phi_k(R,r,\alpha,\beta)B[Q\circ\eta](z)\right|
       \end{align}
  where $Q(z)=z^{n}\overline{P(1/\overline{z})},$ $\sigma(z)=Rz,$ $ \rho(z)=rz,$ $\tau(z)=Rz/k^2,$ $\eta(z)=rz/k^2$ and \begin{equation}\label{phik1}
  \Phi_k(R,r,\alpha,\beta)=\beta\left\{\left(\frac{R+k}{k+r}\right)^{n}-|\alpha|\right\}-\alpha.
  \end{equation}
  \end{lemma}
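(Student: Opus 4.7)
The plan is to reduce the inequality to an application of Lemma~2.2 (the zero-location preservation property of $B$). Introduce the companion polynomial $G(z):=k^{n}Q(z/k^{2})$, an element of $\mathscr{P}_{n}$ whose $n$ zeros $k^{2}/\overline{z_{j}}$ all lie in $|z|\le k$ and which satisfies $|G(z)|=|P(z)|$ on $|z|=k$. Because $G\circ\sigma=k^{n}(Q\circ\tau)$ and $G\circ\rho=k^{n}(Q\circ\eta)$, linearity of $B$ rewrites the right-hand side of \eqref{le3} as $|B[\mathcal{G}](z)|$, where
\[
\mathcal{F}:=P\circ\sigma+\Phi_{k}\,P\circ\rho,\qquad
\mathcal{G}:=G\circ\sigma+\Phi_{k}\,G\circ\rho,
\]
both in $\mathscr{P}_{n}$. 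So the lemma reduces to proving $|B[\mathcal{F}](z)|\le|B[\mathcal{G}](z)|$ for $|z|\ge 1$.

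I would obtain this by showing that, for every $\lambda\in\mathbb{C}$ with $|\lambda|>1$, the polynomial $\mathcal{F}-\lambda\mathcal{G}$ has all its $n$ zeros in $|z|\le 1$. Granted this, Lemma~2.2 (applied with $k=1$) ensures that $B[\mathcal{F}]-\lambda B[\mathcal{G}]=B[\mathcal{F}-\lambda\mathcal{G}]$ has all zeros in $|z|\le 1$, so $B[\mathcal{F}](z)\neq\lambda B[\mathcal{G}](z)$ on $|z|>1$. Letting $\lambda$ range over $\{|\lambda|>1\}$ and observing that $B[\mathcal{G}]$ is itself non-vanishing on $|z|>1$ (by Lemma~2.2 together with the $\lambda\to\infty$ limit of the same zero-location claim), one concludes $|B[\mathcal{F}]/B[\mathcal{G}]|\le 1$ on $|z|>1$, extending to $|z|\ge 1$ by continuity.

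The crucial identity is
\[
\mathcal{F}-\lambda\mathcal{G}\;=\;\Psi\circ\sigma+\Phi_{k}\,\Psi\circ\rho,\qquad \Psi:=P-\lambda G,
\]
which turns the zero-location question into a Lemma~\ref{l1}-type statement about $\Psi$. For $|\lambda|>1$, Rouch\'e on $|z|=k$ (where $|G|=|P|$ gives $|\lambda G|>|P|$) shows $\Psi\in\mathscr{P}_{n}$ has all $n$ zeros in $|z|\le k$; Lemma~\ref{l1} then yields $|\Psi(Rz)|\ge A^{n}|\Psi(rz)|$ on $|z|=1$, where $A:=(R+k)/(r+k)$. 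The bound $|\Phi_{k}|\le|\beta|(A^{n}-|\alpha|)+|\alpha|\le A^{n}$ follows at once from the definition \eqref{phik1} and $|\alpha|,|\beta|\le 1$. Hence on $|z|=1$ the ratio
\[
\frac{\mathcal{F}-\lambda\mathcal{G}}{\Psi\circ\sigma}\;=\;1+\Phi_{k}\,\frac{\Psi\circ\rho}{\Psi\circ\sigma}
\]
lies in the closed disk $\{w:|w-1|\le 1\}$, which is simply connected and meets the origin only on its boundary; a winding-number (equivalently, Rouch\'e) count then shows $\mathcal{F}-\lambda\mathcal{G}$ has the same number of zeros in $|z|<1$ as $\Psi\circ\sigma$, namely all $n$ of them, since $\sigma$ pulls the zeros of $\Psi$ (which sit in $|z|\le k$) into $|z|\le k/R<1$.

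The principal technical hurdle will be the degenerate configurations — $P$ possessing a zero on $|z|=k$, or parameter choices driving $|\Phi_{k}|$ up to $A^{n}$ with exact alignment that pushes the above ratio onto the origin somewhere on $|z|=1$. In those situations the Rouch\'e and winding estimates are only non-strict, and the standard remedy is a continuity argument: prove the inequality first for $P(tz)$ with $t\in(0,1)$ close to $1$ and for $\beta$ replaced by $(1-\varepsilon)\beta$, in which case all hypotheses become strict, and then pass to the limit $t\to 1^{-}$, $\varepsilon\to 0^{+}$.
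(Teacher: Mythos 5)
Your argument is correct, but it is organized differently from the paper's. The paper's proof of this lemma is three lines long: it introduces $F(z)=k^{n}Q(z/k^{2})$, observes that $F$ has all its zeros in $|z|\le k$ and satisfies $|F|=|P|$ on $|z|=k$, and then simply invokes Theorem \ref{t1} (stated and proved later in the paper) for the pair $(P,F)$. You make exactly the same reduction, but instead of citing Theorem \ref{t1} you re-derive the special case you need; your derivation --- the identity $\mathcal{F}-\lambda\mathcal{G}=\Psi\circ\sigma+\Phi_{k}\,\Psi\circ\rho$ with $\Psi=P-\lambda G$, Rouch\'e on $|z|=k$ for $|\lambda|>1$, Lemma \ref{l1} combined with the elementary bound $|\Phi_{k}|\le\bigl(\tfrac{R+k}{r+k}\bigr)^{n}$, and the transfer of zero location through $B$ via the second lemma of Section 2 --- is in substance the paper's own proof of Theorem \ref{t1}, specialized to this $F$. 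Two points of detail. The ``exact alignment'' degeneracy you guard against at the end cannot actually occur when $P$ is zero-free on $|z|=k$: for $|\lambda|>1$, Rouch\'e places all $n$ zeros of $\Psi$ strictly inside $|z|<k$, so Lemma \ref{l1} applies with some radius $k'<k$ and yields the strict inequality $|\Psi(Rz)|>\bigl(\tfrac{R+k}{r+k}\bigr)^{n}|\Psi(rz)|$ needed for a clean Rouch\'e count on $|z|=1$ (this is precisely what the factor $(z-te^{i\delta})$, $t<k$, accomplishes in the paper's argument). Only zeros of $P$ on $|z|=k$ genuinely require extra care; you handle them by dilation and a limiting argument, whereas the paper factors out the boundary zeros as $F=F_{1}F_{2}$ before applying Rouch\'e. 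Both work, and your self-contained version has the incidental advantage of avoiding the paper's forward reference to a theorem proved only after this lemma.
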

  \begin{proof}
   By hypothesis, the polynomial $ P(z)$ does not vanish in $ |z|< k.$ Therefore, all the zeros of polynomial $ Q(z/k^{2}) $ lie in $ |z|<k $. As
    $$|k^{n}Q(z/k^{2})|=|P(z)| \,\,\,\, \textrm{for}\,\,\,\, |z|=k,$$
   applying Theorem \ref{t1} to $P(z)$ with $ F(z) $ replaced by $ k^{n}Q(z/k^{2}) ,$ we get for arbitrary real or complex numbers  $ \alpha,\beta $  with  $ |\alpha|\leq 1, $ $ |\beta|\leq 1 ,$  $ R>r\geq k $  and  $ |z|\geq 1, $
     \begin{align*}
     \big|B[P\circ\sigma](z)+&\Phi_k(R,r,\alpha,\beta)B[P\circ\rho](z)\big|\\
            &\leq k^n \left|B[Q\circ\tau](z)]+\Phi_k(R,r,\alpha,\beta)B[Q\circ\eta](z)\right|,
     \end{align*}
 This proves Lemma \ref{l3}.\\
  \end{proof}
   \begin{lemma}\label{l4}
  If $ P\in\mathscr{P}_n $ and $Q(z)=z^{n}\overline{P(1/\overline{z})}$  then for  $ \alpha ,\beta \in\mathbb{C} $ ,with $ |\alpha|\leq 1, |\beta|\leq 1, R>r\geq k$, $ k\leq 1 $ and $|z|\geq 1 $,
  \begin{align}\nonumber
 \Big|B[P\circ\sigma](z)+&\Phi_k(R,r,\alpha,\beta)B[P\circ\rho](z)\Big|\\\nonumber+k^n&\Big|B[Q\circ\tau](z)+\Phi_k(R,r,\alpha,\beta)B[Q\circ\eta](z)]\Big|\\&\leq \left\{|\lambda_0|\big|1+\Phi_k(R,r,\alpha,\beta)\big|   +\frac{|B[z^n]|}{k^{n}}\left|R^n+r^n\Phi_k(R,r,\alpha,\beta)\right|\right\}\underset{\left|z\right|=k}{Max}\left|P(z)\right|
  \end{align}
  where  $\sigma(z)=Rz,$ $ \rho(z)=rz,$ $\tau(z)=Rz/k^2,$ $\eta(z)=rz/k^2$ and $\Phi_k(R,r,\alpha,\beta)$ is given by \eqref{phik1}.
    \end{lemma}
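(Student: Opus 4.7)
The plan is to bound the two summands on the left-hand side separately, each via a single application of Theorem~\ref{t1} with a carefully chosen comparison polynomial $F$, and then add the two estimates.

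Set $M:=\max_{|z|=k}|P(z)|$. For the summand $|B[P\circ\sigma]+\Phi_k(R,r,\alpha,\beta)B[P\circ\rho]|$, the natural comparison polynomial is the monomial $F_{1}(z):=(M/k^{n})z^{n}$. All its zeros lie at the origin, hence in $|z|\le k$, and $|F_{1}(z)|=M\ge|P(z)|$ for $|z|=k$. Feeding $F_{1}$ into Theorem~\ref{t1}, together with the routine calculation $B[F_{1}\circ\sigma]=(MR^{n}/k^{n})B[z^{n}]$ and $B[F_{1}\circ\rho]=(Mr^{n}/k^{n})B[z^{n}]$, should deliver
\begin{equation*}
\big|B[P\circ\sigma](z)+\Phi_k(R,r,\alpha,\beta)B[P\circ\rho](z)\big|\le\frac{|B[z^{n}](z)|}{k^{n}}\,\big|R^{n}+r^{n}\Phi_k(R,r,\alpha,\beta)\big|\,M.
\end{equation*}

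For the summand $k^{n}|B[Q\circ\tau]+\Phi_k(R,r,\alpha,\beta)B[Q\circ\eta]|$, introduce the polynomial $\widetilde P(z):=k^{n}Q(z/k^{2})\in\mathscr{P}_{n}$. Since $\widetilde P\circ\sigma(z)=k^{n}Q(Rz/k^{2})=k^{n}(Q\circ\tau)(z)$ and similarly $\widetilde P\circ\rho=k^{n}(Q\circ\eta)$, linearity of $B$ yields
\begin{equation*}
\big|B[\widetilde P\circ\sigma](z)+\Phi_k(R,r,\alpha,\beta)B[\widetilde P\circ\rho](z)\big|=k^{n}\,\big|B[Q\circ\tau](z)+\Phi_k(R,r,\alpha,\beta)B[Q\circ\eta](z)\big|,
\end{equation*}
so it suffices to bound the left-hand side of this identity. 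Using the elementary identity $k^{2}/\overline z=z$, valid on $|z|=k$, one computes $|\widetilde P(z)|=|P(k^{2}/\overline z)|=|P(z)|$ on $|z|=k$, whence $\max_{|z|=k}|\widetilde P|=M$. A second application of Theorem~\ref{t1}, now to $\widetilde P$ with the constant comparison polynomial $F_{2}\equiv M$ (whose zero-set is empty and hence trivially contained in $|z|\le k$, and which dominates $|\widetilde P|$ on $|z|=k$), should produce the bound $|\lambda_{0}|\,|1+\Phi_k(R,r,\alpha,\beta)|\,M$ on this second summand.

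Summing the two inequalities gives precisely the statement of Lemma~\ref{l4}. The single delicate point in the plan is the second application of Theorem~\ref{t1}: the comparison polynomial $F_{2}$ is constant, and one must verify that the hypotheses of Theorem~\ref{t1} tolerate this degenerate choice. The zero-location hypothesis on $F_{2}$ holds vacuously, and $|\widetilde P|\le|F_{2}|$ on $|z|=k$ is immediate from the computation just described; everything else is bookkeeping with the linearity of $B$ and the definitions of $\sigma,\rho,\tau,\eta$.
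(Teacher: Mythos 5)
Your first estimate is sound: applying Theorem \ref{t1} with $F_{1}(z)=(M/k^{n})z^{n}$ is precisely how the paper obtains its Corollary \ref{c2}, and it bounds the first summand by $\tfrac{|B[z^{n}]|}{k^{n}}\left|R^{n}+r^{n}\Phi_k(R,r,\alpha,\beta)\right|M$. The second estimate is where the plan fails, and it fails irreparably rather than just technically. Theorem \ref{t1} does not tolerate a constant comparison polynomial: its hypothesis is $F\in\mathscr{P}_n$, i.e.\ $F$ has degree exactly $n$ with all $n$ zeros in $|z|\le k$, and its proof depends on this through Lemma \ref{l1} (the growth estimate for $|G(Rz)|$ against $|G(rz)|$ requires $G$ to have all its zeros in $|z|\le k$). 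More to the point, the inequality you want from that step,
\[
k^{n}\left|B[Q\circ\tau](z)+\Phi_k(R,r,\alpha,\beta)B[Q\circ\eta](z)\right|\le |\lambda_{0}|\left|1+\Phi_k(R,r,\alpha,\beta)\right|M,
\]
is simply false. Take $B$ the identity ($\lambda_{0}=1$, $\lambda_{1}=\lambda_{2}=0$), $\alpha=\beta=0$ so that $\Phi_k=0$, and $P(z)=z^{n}+c$ with $c\neq0$. Then $Q(z)=1+\overline{c}z^{n}$, $M=k^{n}+|c|$, and on $|z|=1$ the left-hand side $k^{n}\,|Q(Rz/k^{2})|$ attains the value $k^{n}+|c|R^{n}/k^{n}$, which exceeds $k^{n}+|c|=|\lambda_{0}||1+\Phi_k|M$ because $R>k$. (The lemma itself survives in this example only because the first summand falls short of its Corollary \ref{c2} bound by exactly the excess; the two summands cannot be bounded separately, since the sum of the two sharp individual bounds is $2\,\tfrac{|B[z^{n}]|}{k^{n}}\left|R^{n}+r^{n}\Phi_k\right|M$, which is weaker than the statement.)

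The paper handles the coupling between the two terms as follows: it perturbs $P$ to $F(z)=P(z)-\mu M$ with $|\mu|>1$, which by Rouch\'e's theorem has no zeros in $|z|<k$, applies Lemma \ref{l3} to $F$, and then chooses the argument of $\mu$ so that the modulus on the right-hand side splits as a difference of moduli; this is legitimate by Corollary \ref{c2} applied to $Q(z/k^{2})$, and it is exactly what transports the term $k^{n}\left|B[Q\circ\tau]+\Phi_k B[Q\circ\eta]\right|$ to the left-hand side with the correct sign before letting $|\mu|\to1$. Some device of this kind is unavoidable; a term-by-term application of Theorem \ref{t1} cannot produce the lemma.
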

    \begin{proof}
    Let $ M=Max_{\left|z\right|=k}\left|P(z)\right|, $ 
   then by Rouche's theorem, the polynomial  $ F(z)=P(z)-\mu M $ does not vanish in $ |z|<k $ for every $ \mu \in\mathbb{C} $ with $ |\mu|>1. $ Applying Lemma \ref{l3} to polynomial $ F(z) $, we get for $ \alpha, \beta \in\mathbb{C} $ with  $ |\alpha|\leq 1, |\beta|\leq 1$ and  $ |z|\geq 1 $, 
   \begin{equation*}
        \left|B[F\circ\sigma](z)+\Phi_k(R,r,\alpha,\beta)B[F\circ\rho](z)\right|
        \leq k^n \left|B[H\circ\tau](z)+\Phi_k(R,r,\alpha,\beta)B[H\circ\eta](z)\right|,
        \end{equation*}
    where  $H(z)=z^{n}\overline{F(1/\overline{z})}=Q(z)-\overline{\mu}Mz^{n}$. Replacing $ F(z) $ by $ P(z)-\mu M $ and $ H(z) $ by $ Q(z)-\overline{\mu}Mz^{n}, $ we have for $ |\alpha|\leq 1, |\beta|\leq 1$ and $ |z|\geq 1 $,
    \begin{align}\nonumber\label{l41}
 \big|B[P\circ\sigma](z)+\Phi_k(R,r,\alpha,\beta)&B[P\circ\rho](z)-\mu\lambda_0\left(1+\Phi_k(R,r,\alpha,\beta)\right)M\big|\\\nonumber
        \leq k^n \Bigg|B[Q\circ\tau](z)]&+\Phi_k(R,r,\alpha,\beta)B[Q\circ\eta](z)]\\&-\frac{\overline{\mu}}{k^{2n}}\left(R^n+r^n\Phi_k(R,r,\alpha,\beta)\right)MB[z^n]\Bigg|
    \end{align}
      where $Q(z)=z^{n}\overline{P(1/\overline{z})}$. \\
     Now choosing argument of $ \mu $ in the right hand side of inequality \eqref{l41} such that
      \begin{align*}\nonumber\label{l42}
      k^n&\bigg|B[Q\circ\tau](z)+\Phi_k(R,r,\alpha,\beta)B[Q\circ\eta](z)-\frac{\overline{\mu}}{k^{2n}}\left(R^n+r^n\Phi_k(R,r,\alpha,\beta)\right)MB[z^n]\bigg|\\\nonumber&=\frac{|\overline{\mu}|}{k^{n}}\left|R^n+r^n\Phi_k(R,r,\alpha,\beta)\right||B[z^n]|M-k^n\left|B[Q\circ\tau](z)+\Phi_k(R,r,\alpha,\beta)B[Q\circ\eta](z)]\right|
      \end{align*}
      which is possible by applying  Corollary \ref{c2}  to polynomial $ Q(z/k^{2}) $ and using the fact $ Max_{\left|z\right|=k}\left|Q(z/k^{2})\right|$ $= M/k^{n} $, we get for $ |\alpha|\leq 1, |\beta|\leq 1$  and  $ |z|\geq 1$,
      \begin{align*}\nonumber
 \big|&B[P\circ\sigma](z)+\Phi_k(R,r,\alpha,\beta)B[P\circ\rho](z)\big|-|\mu\lambda_0|\big|\left(1+\Phi_k(R,r,\alpha,\beta)\right)M\big|\\
        &\leq\frac{|\overline{\mu}|}{k^{n}}\left|R^n+r^n\Phi_k(R,r,\alpha,\beta)\right||B[z^n]|M-k^n\left|B[Q\circ\tau](z)]+\Phi_k(R,r,\alpha,\beta)B[Q\circ\eta](z)]\right|
      \end{align*}
Equivalently for $ |\alpha|\leq 1, |\beta|\leq 1$ and $ |z|\geq 1 $,
\begin{align*}\nonumber
 \big|B[P\circ\sigma](z)&+\Phi_k(R,r,\alpha,\beta)B[P\circ\rho](z)\big|+k^n\left|B[Q\circ\tau](z)]+\Phi_k(R,r,\alpha,\beta)B[Q\circ\eta](z)\right|\\&\leq|\mu| \left\{|\lambda_0|\big|1+\Phi_k(R,r,\alpha,\beta)\big|   +\frac{1}{k^{n}}\left|R^n+r^n\Phi_k(R,r,\alpha,\beta)\right||B[z^n]|\right\}M
      \end{align*}
  Letting $ |\mu|\rightarrow 1 $ ,
  we get the conclusion of Lemma \ref{l4} and this completes proof of Lemma \ref{l4}.
  \end{proof}

\vskip 6mm
\section{\bf\large  Main results}
\vskip 6mm

\begin{theorem}\label{t1}
 If $F\in \mathscr{P}_n$ and $ F(z) $  has all its zeros in the disk $\left|z\right|\leq k$ where $k>0$ and $P(z)$ is a polynomial of degree at most n such that
\begin{equation*}
\left|P(z)\right|\leq \left|F(z)\right| \,\,\, for\,\,\, |z| = k,
\end{equation*}
then for $ \left|\alpha\right|\leq 1,\left|\beta\right|\leq 1 $, $ R>r\geq k $ and  $|z|\geq 1$,  
\begin{align}\label{te1}\nonumber
\big|B[P\circ\sigma](z)+&\Phi_k(R,r,\alpha,\beta)B[P\circ\rho](z)\big|\\&
\leq \left|B[F\circ\sigma](z)+\Phi_k(R,r,\alpha,\beta)B[F\circ\rho](z)\right|
\end{align}
where $\sigma(z)=Rz,$ $ \rho(z)=rz$ and
\begin{equation}\label{phik}
\Phi_k(R,r,\alpha,\beta)=\beta\left\{\left(\frac{R+k}{k+r}\right)^{n}-|\alpha|\right\}-\alpha.
\end{equation}
The result is best possible and the equality holds for the polynomial $ P(z)= e^{i\gamma}F(z) $ where $ \gamma\in\mathbb{R} .$
\end{theorem}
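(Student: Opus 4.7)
The plan is to recast inequality \eqref{te1} as a non-vanishing statement. Set $L[Q](z):=B[Q\circ\sigma](z)+\Phi_k(R,r,\alpha,\beta)\,B[Q\circ\rho](z)$ for $Q\in\mathscr P_n$; linearity of $B$ and of composition make $L$ a linear operator on $\mathscr P_n$. Since for complex $u,v$ with $v\neq 0$, $|u|\leq|v|$ is equivalent to $u\neq\mu v$ for every $|\mu|>1$, the claim \eqref{te1} reduces to showing $L[P](z)-\mu L[F](z)=L[P-\mu F](z)\neq 0$ for all $|z|>1$ and $|\mu|>1$, with the boundary $|z|=1$ recovered by continuity of the polynomials $L[P]$ and $L[F]$.

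The central step is the following \emph{Key Claim}: if $G\in\mathscr P_n$ has all its zeros in $|z|\leq k$, then $L[G](z)\neq 0$ for every $|z|>1$. Applied to $G=F$ this gives $L[F](z)\neq 0$ on $|z|>1$, validating the equivalence above; applied to $G:=P-\mu F$ with $|\mu|>1$ it finishes the proof, once we note that Rouch\'e's theorem on $|z|=k$ (where $|\mu F|>|F|\geq|P|$, modulo a standard inward perturbation of the zeros of $F$ should $F$ vanish on $|z|=k$, with a final limit) places all $n$ zeros of $G$ in $|z|\leq k$.

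To prove the Key Claim, start from $G\in\mathscr P_n$ with all zeros in $|z|\leq k$. Since $Rr\geq k^2$, Lemma~\ref{l1} gives $|G(Rz)|\geq\bigl(\tfrac{R+k}{r+k}\bigr)^n|G(rz)|$ on $|z|=1$. Write $\Phi_k=\beta A-\alpha$ with $A:=\bigl(\tfrac{R+k}{r+k}\bigr)^n-|\alpha|\geq 0$, and split $\Psi(z):=G(Rz)+\Phi_k G(rz)=T(z)+\beta A\,G(rz)$, where $T(z):=G(Rz)-\alpha G(rz)$. By linearity of $B$, $L[G]=B[\Psi]$, so it suffices to prove $\Psi$ has all zeros in $|z|\leq 1$ and then invoke Lemma~2.2 (the $B$-operator invariance lemma, stated just after Lemma~\ref{l1}) with its parameter equal to $1$. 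On $|z|=1$: since $\bigl(\tfrac{R+k}{r+k}\bigr)^n>1\geq|\alpha|$ strictly, $|\alpha G(rz)|<|G(Rz)|$, and $G(R\cdot)$ has all $n$ zeros in $|z|\leq k/R<1$, so Rouch\'e confines every zero of $T$ to $|z|<1$ and in particular $|T|>0$ on $|z|=1$. The triangle inequality combined with Lemma~\ref{l1} yields $|T|\geq A|G(r\cdot)|$ on $|z|=1$, whence $|\beta A\,G(r\cdot)|\leq|\beta||T|$; a second Rouch\'e (for $|\beta|<1$) followed by a Hurwitz limit $|\beta_m|<1$, $\beta_m\to\beta$ (for $|\beta|=1$) places every zero of $\Psi$ in $|z|\leq 1$, as required.

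The principal obstacle is the second Rouch\'e step: when $|\beta|=1$ the comparison $|\beta A\,G(r\cdot)|\leq|T|$ on $|z|=1$ is only non-strict, so Rouch\'e is not directly applicable to $\Psi=T+\beta A\,G(r\cdot)$ and must be pushed through a Hurwitz-type limit. A secondary, routine, nuisance is the opening Rouch\'e for $G=P-\mu F$ which requires $F$ not to vanish on $|z|=k$; this is handled by the standard density argument in the coefficients of $F$. Sharpness is immediate: if $P=e^{i\gamma}F$ with $\gamma\in\mathbb R$, then $L[P]=e^{i\gamma}L[F]$ by linearity, producing equality throughout \eqref{te1}.
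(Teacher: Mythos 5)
Your argument is, at bottom, the same as the paper's: both reduce \eqref{te1} to the assertion that $B[G\circ\sigma](z)+\Phi_k(R,r,\alpha,\beta)B[G\circ\rho](z)\neq 0$ for $|z|>1$ whenever $G=P-\mu F$ (with $|\mu|>1$) has all its zeros in $|z|\leq k$, and both prove that non-vanishing by combining Lemma \ref{l1} with two applications of Rouch\'e's theorem to confine the zeros of $G(Rz)+\Phi_k(R,r,\alpha,\beta)G(rz)$ to the unit disk, and then invoking the zero-preservation property of $B$ (the paper's second, unlabelled lemma). Your packaging is cleaner: the explicit Key Claim, the equivalence $|u|\leq|v|\Leftrightarrow u\neq\mu v$ for all $|\mu|>1$, and a contrapositive in place of the paper's closing contradiction. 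The genuine divergence is how strictness is obtained: the paper extracts an interior zero via $G(z)=(z-te^{i\delta})H(z)$ to upgrade Lemma \ref{l1} to a strict inequality so that the second Rouch\'e applies even for $|\beta|=1$; you instead accept zeros in the closed disk, repair the case $|\beta|=1$ by a Hurwitz limit in $\beta$, and recover $|z|=1$ by continuity. Both are valid; yours is shorter, the paper's yields the open-disk statement directly. (You do tacitly use that $P-\mu F$ has exact degree $n$, which Lemma \ref{l1} and the Rouch\'e zero count require; this follows from $|a_n|\leq|b_n|$, itself a consequence of the maximum principle applied to $P/F$ in $|z|\geq k$.)

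The one step that fails as written is your handling of the case where $F$ vanishes on $|z|=k$. Neither an ``inward perturbation of the zeros of $F$'' nor a ``density argument in the coefficients of $F$'' can be used here, because such perturbations do not preserve the hypothesis $|P(z)|\leq|F(z)|$ on $|z|=k$: already for $F(z)=z-k$ and $F_t(z)=z-tk$ with $t<1$ one has $|F_t(-k)|=k(1+t)<2k=|F(-k)|$, so a $P$ attaining equality at $z=-k$ violates the perturbed hypothesis, and Rouch\'e on $|z|=k$ cannot be rescued this way. The correct fixes are standard and both appear in the paper's opening paragraph: either factor $F=F_1F_2$ and $P=P_1F_1$ with $F_1$ carrying the boundary zeros (which $P$ must share) and apply Rouch\'e to $P_1-\mu F_2$; or, more directly, note that the maximum modulus principle applied to $P/F$ on $|z|\geq k$ gives $|P(z)|\leq|F(z)|$ there, whence $P(z)=\mu F(z)$ with $|z|>k$ and $|\mu|>1$ is impossible since $F(z)\neq0$, which places all zeros of $P-\mu F$ in $|z|\leq k$ with no boundary Rouch\'e at all. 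With that substitution your proof is complete.
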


 \begin{proof}[\textnormal{\textbf{Proof of Theorem \ref{t1}}}]
  Since polynomial $F(z)$ of degree $n$  has all its zeros in $|z|\leq k $ and $P(z)$ is a polynomial of degree at most $n$ such that 
   \begin{equation}\label{tp1}
   |P(z)|\leq |F(z)|\,\,\,\, \textrm{for} \,\,\,\,|z|= k,
   \end{equation}
   therefore, if $F(z)$ has a zero of multiplicity $s$ at $z=ke^{i\theta_{0}},$ $0\leq \theta_0<2\pi,$ then $P(z)$ has a zero of multiplicity at least $s$ at $z=ke^{i\theta_{0}}$. If $P(z)/F(z)$ is a constant, then inequality \eqref{te1} is obvious. We now assume that $P(z)/F(z)$ is not a constant, so that by the maximum modulus principle, it follows that\\
   \[|P(z)|<|F(z)|\,\,\,\textrm{for}\,\, |z|>k\,\,.\]
   Suppose $F(z)$ has $m$ zeros on $|z|=k$ where $0\leq m < n$, so that we can write\\
   \[F(z) = F_{1}(z)F_{2}(z)\]
   where $F_{1}(z)$ is a polynomial of degree $m$ whose all zeros lie on $|z|=k$ and $F_{2}(z)$ is a polynomial of degree exactly $n-m$ having all its zeros in $|z|<k$. This implies with the help of inequality \eqref{tp1} that\\
   \[P(z) = P_{1}(z)F_{1}(z)\]
   where $P_{1}(z)$ is a polynomial of degree at most $n-m$. Again, from inequality \eqref{tp1}, we have
   \[|P_{1}(z)| \leq |F_{2}(z)|\,\,\,for \,\, |z|=k\,\]
   where $F_{2}(z) \neq 0 \,\, for\,\, |z|=k$. Therefore
    for every real or complex number $\lambda $ with $|\lambda|>1$, a direct application of Rouche's theorem shows that the zeros of the polynomial $P_{1}(z)- \lambda F_{2}(z)$ of degree $n-m \geq 1$ lie in $|z|<k$ hence the polynomial 
    \[G(z) = F_{1}(z)\left(P_{1}(z) - \lambda F_{2}(z)\right)=P(z) - \lambda F(z)\]
    has all its zeros in $|z|\leq k$ with at least one zero in $|z|<k$, so that we can write\\
    \[G(z)= (z-te^{i\delta})H(z)\]
    where $t <k$ and $H(z)$ is a polynomial of degree $n-1$ having all its zeros in $|z|\leq k$. Applying Lemma \ref{l1} to the polynomial $H(z)$, we obtain for every $R >r\geq k $ and $0 \leq \theta <2\pi$,
   \begin{align*}
   |G(Re^{i\theta})| =&|Re^{i\theta}-te^{i\delta}||H(Re^{i\theta})|\\
    \geq& |Re^{i\theta}-te^{i\delta}|\left(\frac{R+k}{k+r}\right)^{n-1}|H(re^{i\theta})|,\\
   =&\left(\frac{R+k}{k+r}\right)^{n-1}\frac{|Re^{i\theta}-te^{i\delta}|}{|re^{i\theta}-te^{i\delta}|}|(re^{i\theta}-te^{i\delta})H(re^{i\theta})|,\\
   \geq& \left(\frac{R+k}{k+r}\right)^{n-1}\left(\frac{R+t}{r+t}\right)|G(re^{i\theta})|.
   \end{align*}
   This implies  for $R >r\geq k $ and $0 \leq \theta <2\pi$,
   \begin{equation}\label{tp2}
   \left(\frac{r+t}{R+t}\right)|G(Re^{i\theta})|\geq \left(\frac{R+k}{k+r}\right)^{n-1}|G(re^{i\theta})|.
   \end{equation}
   Since $R >r\geq k $ so that $G(Re^{i\theta})\neq 0$ for $0 \leq \theta <2\pi$ and $\frac{r+k}{k+R}>\frac{r+t}{R+t}$, from inequality \eqref{tp2}, we obtain
   \begin{equation}\label{tp3}
   |G(Re^{i\theta})|> \left(\frac{R+k}{k+r}\right)^{n}|G(re^{i\theta})|,\,\,\,\,\,\, R >r\geq k \,\,\,\, \textrm{and} \,\,\,\,\,0 \leq \theta <2\pi.
   \end{equation}
   Equivalently,
   \[|G(Rz)|> \left(\frac{R+k}{k+r}\right)^{n}|G(rz)|\]
   for $|z|=1$ and $R >r\geq k $. Hence for every real or complex number $\alpha$ with $|\alpha|\leq 1$ and $R >r\geq k,$ we have
   \begin{align}\label{tp4}
   \left|G(Rz)-\alpha G(rz)\right|&\geq\left|G(Rz)\right|-|\alpha|\left|G(rz)\right|\\\nonumber
  &>\left\{\left(\frac{R+k}{k+r}\right)^{n}-|\alpha|\right\}|G(rz)|, \,\,\,\textrm{for}\,\,\,|z|=1.
   \end{align}
   Also, inequality \eqref{tp3} can be written in the form
   \begin{equation}\label{tp5}
   |G(re^{i\theta})|<\left(\frac{k+r}{R+k}\right)^{n}|G(Re^{i\theta})|
   \end{equation}
   for every $R >r\geq k $ and $0 \leq \theta <2\pi.$ Since $G(Re^{i\theta}) \neq 0$ and $\left(\frac{k+r}{R+k}\right)^{n}<1$, from inequality \eqref{tp5}, we obtain for $0 \leq \theta <2\pi$ and $R >r\geq k$,
   \[|G(re^{i\theta})|<|G(Re^{i\theta})|.\]
   That is,
   \[|G(rz)|<|G(Rz)|\,\,\, \textrm{for}\,\,\,\, |z|=1.\]
   Since all the zeros of $G(Rz)$ lie in $|z|\leq (k/R)<1$, a direct application of Rouche's theorem shows that the polynomial $G(Rz)-\alpha G(rz)$ has all its zeros in $|z|<1$ for every real or complex number $\alpha$ with $|\alpha|\leq 1$. Applying Rouche's theorem again, it follows from \eqref{tp4} that for arbitrary real or complex numbers $\alpha,\beta$ with $|\alpha|\leq 1,|\beta|\leq 1$ and $R >r\geq k$, all the zeros of the polynomial
   \begin{align*}
   T(z)=&G(Rz)-\alpha G(rz)+\beta\left\{\left(\frac{R+k}{k+r}\right)^{n}-|\alpha|\right\}G(rz)\\
    =&\left[P(Rz)-\alpha P(rz)+\beta\left\{\left(\frac{R+k}{k+r}\right)^{n}-|\alpha|\right\}P(rz)\right]\\
    &\,\,\,\,-\lambda \left[F(Rz)-\alpha F(rz)+\beta\left\{\left(\frac{R+k}{k+r}\right)^{n}-|\alpha|\right\}F(rz)\right]
   \end{align*}
   lie in $|z|<1$.\\
   Applying Lemma \ref{l3} to the polynomial $T(z)$ and noting that $B$ is a linear operator, it follows that all the zeros of polynomial
    \begin{align*}
      B[T](z)=&\left[B[P\circ\sigma](z)-\alpha B[P\circ\rho](z)+\beta\left\{\left(\frac{R+k}{k+r}\right)^{n}-|\alpha|\right\}B[P\circ\rho](z)\right]\\
       &\,\,\,-\lambda \left[B[F\circ\sigma](z)-\alpha B[F\circ\rho](z)+\beta\left\{\left(\frac{R+k}{k+r}\right)^{n}-|\alpha|\right\}[F\circ\rho](z)\right]
      \end{align*}
      lie in $|z|<1.$ This implies
   \begin{align}\label{tp6}\nonumber
  \big|B[P\circ\sigma](z)+&\Phi_k(R,r,\alpha,\beta)B[P\circ\rho](z)\big|
  \\&\leq \left|B[P\circ\sigma](z)+\Phi_k(R,r,\alpha,\beta)B[P\circ\rho](z)\right|,
  \end{align}
   for $|z|\geq 1$ and $R >r\geq k$. If inequality \eqref{tp6} is not true, then there a point $z=z_0$ with $|z_0|\geq 1$ such that
   \begin{align*}
    \big|\big\{B[P\circ\sigma](z)+&\Phi_k(R,r,\alpha,\beta)B[P\circ\rho](z)\big\}_{z=z_0}\big|
   \\& \geq \left|\left\{B[F\circ\sigma](z)+\Phi_k(R,r,\alpha,\beta)B[F\circ\rho](z)\right\}_{z=z_0}\right|,
    \end{align*}
But all the zeros of $F(Rz)$ lie in $|z|< (k/R)<1$, therefore, it follows (as in case of $G(z)$) that all the zeros of $F(Rz)-\alpha F(rz)+\beta\left\{\left(\frac{R+k}{k+r}\right)^{n}-|\alpha|\right\}F(rz)$ lie in $\left|z\right|<1$. Hence, by Lemma \ref{l3}, 
$$\left\{B[F\circ\sigma](z)+\Phi_k(R,r,\alpha,\beta)B[F\circ\rho](z)\right\}_{z=z_0}\neq 0$$
  with $|z_0| \geq 1$.We take
     $$\lambda = \dfrac{\left\{B[P\circ\sigma](z)+\Phi_k(R,r,\alpha,\beta)B[P\circ\rho](z)\right\}_{z=z_0}}{\left\{B[P\circ\sigma](z)+\Phi_k(R,r,\alpha,\beta)B[P\circ\rho](z)\right\}_{z=z_0}},$$
     then $\lambda$ is a well defined real or complex number with $|\lambda|>1$ and with this choice of $\lambda$, we obtain $\{B[T](z)\}_{z=z_0}=0$
   where $|z_0| \geq 1$. This contradicts the fact that all the zeros of $B[T(z)]$ lie in $|z|<1$. Thus \eqref{tp6} holds for $ |\alpha|\leq 1 $, $ |\beta|\leq 1 $, $ |z|\geq 1 $, and $ R>r\geq k. $ \\
 \end{proof}
 
For $ \alpha = 0 $ in Theorem \ref{t1}, we obtain the following.

\begin{corollary}\label{c1}
 If $F\in \mathscr{P}_n$ and $ F(z) $  has all its zeros in the disk $\left|z\right|\leq k,$ where $k>0$ and $P(z)$ is a polynomial of degree at most n such that
\begin{equation*}
\left|P(z)\right|\leq \left|F(z)\right| \,\,\, for\,\,\, |z| = k,
\end{equation*}
then for $\left|\beta\right|\leq 1 $, $ R>r\geq k $ and  $|z|\geq 1$,  
\begin{align}\label{ce1}\nonumber
\bigg|B[P\circ\sigma](z)+&\beta\left(\frac{R+k}{k+r}\right)^{n}B[P\circ\rho](z)\bigg|
\\&\leq \left|B[F\circ\sigma](z)+\beta\left(\frac{R+k}{k+r}\right)^{n}B[F\circ\rho](z)\right|
\end{align}
where $\sigma(z)=Rz,$ $ \rho(z)=rz.$ 
The result is sharp, and the equality holds for the polynomial $ P(z)= e^{i\gamma}F(z) $ where $ \gamma\in\mathbb{R} .$ 
\end{corollary}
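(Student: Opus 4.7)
The plan is very short, since Corollary \ref{c1} is just a specialization of Theorem \ref{t1}. I would simply substitute $\alpha = 0$ into the inequality \eqref{te1}. With $\alpha = 0$, we have $|\alpha| = 0$, so the quantity $\Phi_k(R,r,\alpha,\beta)$ defined in \eqref{phik} collapses to
\[
\Phi_k(R,r,0,\beta) \;=\; \beta\left\{\left(\frac{R+k}{k+r}\right)^{n} - 0\right\} - 0 \;=\; \beta\left(\frac{R+k}{k+r}\right)^{n}.
\]
Plugging this value into the left and right sides of \eqref{te1} then yields exactly the inequality \eqref{ce1} claimed in the corollary.

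For the sharpness statement, I would observe that Theorem \ref{t1} already asserts equality in \eqref{te1} for $P(z)=e^{i\gamma}F(z)$ with $\gamma \in \mathbb{R}$. Since the $\alpha=0$ case is a particular instance of the hypotheses of Theorem \ref{t1}, the same extremal polynomials continue to attain equality in \eqref{ce1}, so no extra argument is required.

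There is no real obstacle: the entire content is the algebraic simplification of $\Phi_k$. The only thing to be careful about is to verify that setting $\alpha=0$ does not violate any hypothesis of Theorem \ref{t1} (it clearly does not, since $|\alpha|=0\leq 1$), and that the stated range $|\beta|\leq 1$, $R>r\geq k$, $|z|\geq 1$ in the corollary coincides with the corresponding range in Theorem \ref{t1}, which it does. Hence the proof is essentially one substitution.
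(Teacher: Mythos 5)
Your proposal is correct and coincides exactly with the paper's derivation: the paper obtains Corollary \ref{c1} precisely by setting $\alpha=0$ in Theorem \ref{t1}, whereupon $\Phi_k(R,r,0,\beta)=\beta\left(\frac{R+k}{k+r}\right)^{n}$ and \eqref{te1} becomes \eqref{ce1}. The sharpness claim likewise carries over directly from the extremal case of Theorem \ref{t1}, so nothing further is needed.
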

If we choose $ F(z)= z^{n}M/k^{n} $, where $ M = Max_{\left|z\right|=k}\left|P(z)\right|$ in Theorem \ref{t1}, we get the following result.
\begin{corollary}\label{c2}
If $ P\in\mathscr{P}_n $  then for $ \alpha,\beta\in\mathbb{C} $ with $ |\alpha|\leq 1,$ $|\beta|\leq 1,$ $ R >r\geq k>0$ and $ |z| = 1, $ 
  \begin{align}\label{ce2}\nonumber
\big|B[P\circ\sigma](z)+&\Phi_k(R,r,\alpha,\beta)B[P\circ\rho](z)\big|\\&
  \leq\frac{1}{k^n}\left|R^{n}+r^n\Phi_k(R,r,\alpha,\beta)\right||B[z^n]|\underset{\left|z\right|=k}{Max}\left|P(z)\right|
  \end{align}
  where $\sigma(z)=Rz,$ $ \rho(z)=rz$ and  $\Phi_k(R,r,\alpha,\beta)$ is given by \textnormal{\eqref{phik}}. The result is best possible and equality in \eqref{ce2} holds for $P(z)=az^n,$ $a\neq 0.$
\end{corollary}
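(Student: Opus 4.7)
The plan is to invoke Theorem \ref{t1} with a specific extremal majorant, as the hint preceding the statement already indicates. Setting $M = \max_{|z|=k}|P(z)|$, I would choose $F(z) = (M/k^{n})z^{n}$. All $n$ zeros of $F$ lie at the origin, hence in $|z|\leq k$, and on $|z|=k$ one has $|F(z)| = M \geq |P(z)|$, so the hypotheses of Theorem \ref{t1} are verified. The theorem then gives, for $|\alpha|\leq 1$, $|\beta|\leq 1$, $R>r\geq k$ and $|z|\geq 1$,
\[
\bigl|B[P\circ\sigma](z)+\Phi_k(R,r,\alpha,\beta)B[P\circ\rho](z)\bigr| \leq \bigl|B[F\circ\sigma](z)+\Phi_k(R,r,\alpha,\beta)B[F\circ\rho](z)\bigr|.
\]

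What remains is to evaluate the right-hand side explicitly. Since $F$ is a monomial of degree $n$, I compute $F(\sigma(z)) = (R^{n}M/k^{n})z^{n}$ and $F(\rho(z)) = (r^{n}M/k^{n})z^{n}$, and by the linearity of $B$,
\[
B[F\circ\sigma](z) + \Phi_k(R,r,\alpha,\beta)B[F\circ\rho](z) = \frac{M}{k^{n}}\bigl(R^{n}+r^{n}\Phi_k(R,r,\alpha,\beta)\bigr)B[z^{n}](z).
\]
Taking absolute values and restricting to $|z|=1$ then yields the inequality \eqref{ce2}. I do not foresee any real obstacle in this argument: Theorem \ref{t1} carries all the analytic weight, and the remaining step is just a monomial computation together with the linearity of $B$.

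For the sharpness claim, I would verify that $P(z)=az^{n}$ with $a\neq 0$ forces equality throughout. In this case $\max_{|z|=k}|P(z)| = |a|k^{n}$, so the right-hand side of \eqref{ce2} equals $|a|\,|R^{n}+r^{n}\Phi_k(R,r,\alpha,\beta)|\,|B[z^{n}](z)|$; on the other hand, by the linearity of $B$, the left-hand side reduces to the same quantity, finishing the verification.
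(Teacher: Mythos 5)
Your proposal is correct and follows exactly the paper's route: the paper obtains Corollary \ref{c2} from Theorem \ref{t1} by the very same choice $F(z)=Mz^{n}/k^{n}$ with $M=\underset{|z|=k}{Max}|P(z)|$, stating this in a single line, while you additionally spell out the hypothesis check, the monomial evaluation of $B[F\circ\sigma](z)+\Phi_k(R,r,\alpha,\beta)B[F\circ\rho](z)$ via linearity of $B$, and the sharpness verification for $P(z)=az^{n}$. All of these steps are sound, so there is nothing to correct.
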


Next, we take  $ P(z)= z^{n}m/k^{n} $, where $ m = Min_{\left|z\right|=k}\left|P(z)\right|$ in Theorem \ref{t1}, we get the following result.
\begin{corollary}\label{c3}
If $ F\in\mathscr{P}_n $ and $F(z)$ have all its zeros in the disk $|z|\leq k,$  where $k>0$ then for $ \alpha,\beta\in\mathbb{C} $ with $ |\alpha|\leq 1,$ $|\beta|\leq 1,$ $ R >r\geq k>0$ 
  \begin{align}\label{ce3}\nonumber
\underset{|z|=1}{Min}\big|B[F\circ\sigma](z)+&\Phi_k(R,r,\alpha,\beta)B[F\circ\rho](z)\big|\\&
  \geq\frac{|B[z^n]|}{k^n}\left|R^{n}+r^n\Phi_k(R,r,\alpha,\beta)\right|\underset{\left|z\right|=k}{Min}\left|P(z)\right|,
  \end{align}
  where $\sigma(z)=Rz,$ $ \rho(z)=rz$ and  $\Phi_k(R,r,\alpha,\beta)$ is given by \textnormal{\eqref{phik}}. The result is Sharp.
\end{corollary}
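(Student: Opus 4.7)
The plan is to deduce Corollary \ref{c3} from Theorem \ref{t1} by a judicious choice of comparison polynomial, reversing the usual roles. Keeping the hypothesis that $F$ has all its zeros in $|z|\leq k$, I will take as the companion polynomial in Theorem \ref{t1} the monomial whose modulus on $|z|=k$ realizes the minimum modulus of $F$ on that circle. (The statement as printed appears to contain a typographical slip: the $\min_{|z|=k}|P(z)|$ on the right of \eqref{ce3} should read $\min_{|z|=k}|F(z)|$, and similarly in the sentence preceding the corollary. I will prove the corrected version.)

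Concretely, set $m:=\min_{|z|=k}|F(z)|$. If $m=0$ the assertion is trivial, so assume $m>0$ and put $P(z):=(m/k^n)\,z^n$, a polynomial of degree $n$ satisfying $|P(z)|=m\leq|F(z)|$ for $|z|=k$. Theorem \ref{t1} then yields, for $|z|\geq 1$,
\begin{equation*}
\big|B[P\circ\sigma](z)+\Phi_k(R,r,\alpha,\beta)B[P\circ\rho](z)\big|\leq \big|B[F\circ\sigma](z)+\Phi_k(R,r,\alpha,\beta)B[F\circ\rho](z)\big|.
\end{equation*}
Since $(P\circ\sigma)(z)=(mR^n/k^n)z^n$ and $(P\circ\rho)(z)=(mr^n/k^n)z^n$, the linearity of $B$ gives
\begin{equation*}
B[P\circ\sigma](z)+\Phi_k(R,r,\alpha,\beta)B[P\circ\rho](z)=\frac{m}{k^n}\big(R^n+r^n\Phi_k(R,r,\alpha,\beta)\big)B[z^n].
\end{equation*}
From the definition \eqref{BO} one sees directly that $B[z^n]$ is a constant multiple of $z^n$, so $|B[z^n]|$ is constant on $|z|=1$. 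Restricting the previous inequality to $|z|=1$ and taking the minimum of the right-hand side over the unit circle therefore produces exactly \eqref{ce3}.

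Sharpness is witnessed by $F(z)=az^n$ with $a\neq 0$: then $m=|a|k^n$, and a direct computation gives $B[F\circ\sigma](z)+\Phi_k(R,r,\alpha,\beta) B[F\circ\rho](z)=a\big(R^n+r^n\Phi_k(R,r,\alpha,\beta)\big)B[z^n]$, so both sides of \eqref{ce3} equal $|a|\,|R^n+r^n\Phi_k(R,r,\alpha,\beta)|\,|B[z^n]|$. No step presents a genuine obstacle here; the only piece requiring mild attention is the verification that the monomial test polynomial $(m/k^n)z^n$ meets the hypothesis $|P|\leq|F|$ on $|z|=k$, after which the linearity of $B$ and the identity $B[z^n]\propto z^n$ close the argument immediately.
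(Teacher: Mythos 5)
Your proposal is correct and follows exactly the paper's route: the paper obtains Corollary \ref{c3} by substituting $P(z)=z^{n}m/k^{n}$ with $m=\min_{|z|=k}|F(z)|$ into Theorem \ref{t1}, which is precisely your choice of comparison polynomial. You merely supply the details the paper leaves implicit (the trivial case $m=0$, the linearity computation, the fact that $B[z^n]$ is a constant multiple of $z^n$, and the typographical correction of $P$ to $F$ in the right-hand side of \eqref{ce3}), all of which are sound.
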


If we take $\beta=0$ in \eqref{ce2}, we get the following result.
\begin{corollary}\label{c4}
If $ P\in\mathscr{P}_n $  then for $ \alpha\in\mathbb{C} $ with $ |\alpha|\leq 1,$  $ R >r\geq k>0$ and $ |z| \geq 1, $ 
  \begin{equation}\label{ce4}
\left|B[P\circ\sigma](z)-\alpha B[P\circ\rho](z)\right|
  \leq\frac{1}{k^n}\left|R^{n}-\alpha r^n\right||B[z^n]|\underset{\left|z\right|=k}{Max}\left|P(z)\right|
  \end{equation}
  where $\sigma(z)=Rz,$ $ \rho(z)=rz.$ The result is best possible as shown by $ P(z)=az^{n}, a\neq 0. $
  \end{corollary}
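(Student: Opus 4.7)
The plan is to derive Corollary \ref{c4} as an immediate specialization of Corollary \ref{c2} at $\beta = 0$, exactly as the sentence preceding the statement advertises. No new analytical machinery is needed; the content reduces to an algebraic simplification of $\Phi_k(R,r,\alpha,\beta)$ and the coefficient on the right-hand side.

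First I would compute $\Phi_k(R,r,\alpha,0)$ directly from the definition \eqref{phik}: the $\beta$-dependent bracket drops out entirely, leaving $\Phi_k(R,r,\alpha,0) = -\alpha$. Substituting into \eqref{ce2}, the linear combination $B[P\circ\sigma](z) + \Phi_k(R,r,\alpha,0)B[P\circ\rho](z)$ on the left becomes $B[P\circ\sigma](z) - \alpha B[P\circ\rho](z)$, while the scalar factor $R^n + r^n \Phi_k(R,r,\alpha,0)$ on the right collapses to $R^n - \alpha r^n$. These are precisely the two sides of \eqref{ce4}, so the inequality follows at once.

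For sharpness, I would reuse the extremal polynomial $P(z)=az^n$ that already saturates Corollary \ref{c2}. By linearity and the fact that $B[z^n]$ is a scalar multiple of $z^n$, one has $B[P\circ\sigma](z) = aR^n B[z^n](z)$ and $B[P\circ\rho](z) = ar^n B[z^n](z)$, so the left-hand side of \eqref{ce4} equals $|a||R^n - \alpha r^n||B[z^n](z)|$; combined with $\max_{|z|=k}|P(z)| = |a|k^n$, the right-hand side collapses to the same quantity.

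The only point that requires a moment's care — and the closest thing to an obstacle — is the domain of validity. Corollary \ref{c2} is written under the restriction $|z|=1$, whereas \eqref{ce4} asks for the full range $|z|\geq 1$. This is not a substantive obstruction: Corollary \ref{c2} is itself obtained by applying Theorem \ref{t1} with $F(z)=z^{n}M/k^{n}$, and Theorem \ref{t1} delivers its conclusion on $|z|\geq 1$, so the inequality \eqref{ce2} in fact holds on the larger range. I would flag this trace-through explicitly before performing the $\beta=0$ substitution to make the extension transparent.
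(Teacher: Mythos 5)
Your proposal is correct and follows exactly the paper's route: the paper obtains Corollary \ref{c4} precisely by setting $\beta=0$ in Corollary \ref{c2}, so that $\Phi_k(R,r,\alpha,0)=-\alpha$ and the coefficient $R^n+r^n\Phi_k$ collapses to $R^n-\alpha r^n$. Your extra remark about the $|z|=1$ versus $|z|\geq 1$ discrepancy is a sensible catch (the restriction in Corollary \ref{c2} appears to be a slip, since Theorem \ref{t1} is stated for $|z|\geq 1$), but it does not change the argument.
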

\vskip 2mm

\indent For polynomials $ P\in\mathscr{P}_n $ having no zero in $ |z|< k $, we establish the following result which leads to a compact generalization of inequality \eqref{eq3},\eqref{eq4},\eqref{e8} and \eqref{qe2}.
\begin{theorem}\label{t2}
 If $ P\in \mathscr{P}_n $ and $ P(z) $ does not vanish in the disk $ |z|<k,$ where $ k\leq 1, $ then for all $ \alpha,\beta\in\mathbb{C} $ with $ |\alpha|\leq 1,$  $|\beta|\leq 1 $ , $ R>r\geq k >0$ and $ |z| \geq 1 $,
\begin{align}\label{te2}\nonumber
\big|B[P\circ\sigma](z)+&\Phi_k(R,r,\alpha,\beta)B[P\circ\rho](z)\big|
\\& \leq\frac{1}{2}\bigg[\frac{|B[z^{n}]|}{k^{^{n}}}\big|R^{n}+r^n\Phi_k(R,r,\alpha,\beta)\big|+\left|1+\Phi_k(R,r,\alpha,\beta) \right||\lambda_0|
 \bigg]\underset{\left|z\right|=k}{Max}\left|P(z)\right|
\end{align}
where $\sigma(z)=Rz,$ $ \rho(z)=rz$ and  $\Phi_k(R,r,\alpha,\beta)$ is given by \eqref{phik}.
\end{theorem}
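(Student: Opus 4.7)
The plan is to derive Theorem \ref{t2} as an immediate synthesis of Lemma \ref{l3} and Lemma \ref{l4}, both of which are already proved under exactly the hypotheses assumed here (namely $P\in\mathscr{P}_n$ non-vanishing in $|z|<k$, with $k\leq 1$, $R>r\geq k$, $|\alpha|\leq 1$, $|\beta|\leq 1$, $|z|\geq 1$). The whole argument is essentially an additive symmetrization: Lemma \ref{l3} tells us that the quantity in question is dominated by its ``reciprocal'' counterpart built from $Q(z)=z^n\overline{P(1/\bar z)}$, while Lemma \ref{l4} bounds the \emph{sum} of the two quantities by the desired expression; averaging the two yields the theorem.

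Concretely, I would first invoke Lemma \ref{l3} to write
\begin{equation*}
\bigl|B[P\circ\sigma](z)+\Phi_k(R,r,\alpha,\beta)B[P\circ\rho](z)\bigr|
\leq k^n\bigl|B[Q\circ\tau](z)+\Phi_k(R,r,\alpha,\beta)B[Q\circ\eta](z)\bigr|,
\end{equation*}
valid for $|z|\geq 1$, where $\tau(z)=Rz/k^2$ and $\eta(z)=rz/k^2$. Next, I would add the quantity $\bigl|B[P\circ\sigma](z)+\Phi_k(R,r,\alpha,\beta)B[P\circ\rho](z)\bigr|$ to both sides; the right-hand side then becomes exactly the left-hand side of the inequality in Lemma \ref{l4}.

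Applying Lemma \ref{l4} to bound that sum, I obtain
\begin{align*}
2\bigl|B[P\circ\sigma](z)&+\Phi_k(R,r,\alpha,\beta)B[P\circ\rho](z)\bigr|\\
&\leq \left\{|\lambda_0|\bigl|1+\Phi_k(R,r,\alpha,\beta)\bigr|+\frac{|B[z^n]|}{k^n}\bigl|R^n+r^n\Phi_k(R,r,\alpha,\beta)\bigr|\right\}\underset{|z|=k}{\max}|P(z)|.
\end{align*}
Dividing through by $2$ gives \eqref{te2}.

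The only step requiring any care is checking that the hypotheses match: Lemma \ref{l3} needs $P$ non-vanishing on $|z|<k$, which is given; Lemma \ref{l4} additionally requires $k\leq 1$, which is precisely the hypothesis stated in Theorem \ref{t2}. No extra estimates, no maximum modulus manipulations, and no perturbation by $\mu M$ are needed here because all of that work has already been absorbed into Lemma \ref{l4}. So there is no genuine obstacle; the theorem is really a packaged corollary, and I would expect the proof to fit in a few lines once the two lemmas are cited.
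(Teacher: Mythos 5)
Your proposal is correct and follows essentially the same route as the paper: the paper likewise uses Lemma \ref{l3} to dominate the quantity by its $Q$-counterpart, observes that twice the quantity is then bounded by the sum appearing on the left of Lemma \ref{l4}, applies that lemma, and divides by $2$.
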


\begin{proof}[\textnormal{\textbf{Proof of Theorem \ref{t2}}}]
  Since $ P(z) $ does not vanish in $ |z|<k,\,\, k\leq 1 $, by Lemma \ref{l3}, we have for all $ \alpha,\beta\in\mathbb{C} $ with $ |\alpha|\leq 1,\,\,|\beta|\leq 1 ,$ $ R>1 $ and $ |z|\geq 1 ,$  
  \begin{align}\label{tp'1}\nonumber
         \big|B[P\circ\sigma](z)+&\Phi_k(R,r,\alpha,\beta)B[P\circ\rho](z)\big|\\&
         \leq k^n \left|B[Q\circ\tau](z)+\Phi_k(R,r,\alpha,\beta)B[Q\circ\eta](z)\right|,
         \end{align}
     where$\sigma(z)=Rz,$ $ \rho(z)=rz,$ $\tau(z)=Rz/k^2, $ $ \eta(z)=rz/k^2$ and  $Q(z)=z^{n}\overline{P(1/\overline{z})}.$ 
   Inequality \eqref{tp'1} in conjunction with Lemma \ref{l4} gives for all $ \alpha,\beta\in\mathbb{C} $ with $ |\alpha|\leq 1,\,\,|\beta|\leq 1, $  $ R>r\geq k $ and $ |z|\geq 1 ,$ 
 \begin{align*}
       2 \big|&B[P\circ\sigma](z)+\Phi_k(R,r,\alpha,\beta)B[P\circ\rho](z)\big|\\& 
       \leq\big|B[P\circ\sigma](z)+\Phi_k(R,r,\alpha,\beta)B[P\circ\rho](z)\big|+k^n\left|B[Q\circ\tau](z)]+\Phi_k(R,r,\alpha,\beta)B[Q\circ\eta](z)\right|\\&\leq \left\{|\lambda_0|\big|1+\Phi_k(R,r,\alpha,\beta)\big|   +\frac{|B[z^n]|}{k^{n}}\left|R^n+r^n\Phi_k(R,r,\alpha,\beta)\right|\right\}|\underset{\left|z\right|=k}{Max}\left|P(z)\right|.
  \end{align*} 
   This completes the proof of Theorem \ref{t2}.\\
\end{proof} 

We finally prove the following result, which is the refinement of Theorem \ref{t2}.
\begin{theorem}\label{t3}
 If $ P\in \mathscr{P}_n $ and $ P(z) $ does not vanish in the disk $ |z|<k,$ where $ k\leq 1, $ then for all $ \alpha,\beta\in\mathbb{C} $ with $ |\alpha|\leq 1,$  $|\beta|\leq 1 $ , $ R>r\geq k>0 $ and $ |z| = 1 $,
\begin{align}\nonumber\label{te3}
\bigg|B[P\circ\sigma](z)+&\Phi_k(R,r,\alpha,\beta)B[P\circ\rho](z)\bigg|\\\nonumber
 \leq&\frac{1}{2}\Bigg[\bigg\{\frac{|B[z^{n}]|}{k^{^{n}}}\left|R^{n}+r^n\Phi_k(R,r,\alpha,\beta)\right|+\left|1+\Phi_k(R,r,\alpha,\beta) \right||\lambda_0|
 \bigg\}\underset{\left|z\right|=k}{Max}\left|P(z)\right|\\ &- \bigg\{\frac{|B[z^{n}]|}{k^{^{n}}}\left|R^{n}+r^n\Phi_k(R,r,\alpha,\beta)\right|-\left|1+\Phi_k(R,r,\alpha,\beta) \right||\lambda_0|
  \bigg\}\underset{\left|z\right|=k}{Min}\left|P(z)\right|\Bigg]
\end{align}
where $\sigma(z)=Rz,$ $ \rho(z)=rz$ and $\Phi_k(R,r,\alpha,\beta)$ is given by \eqref{phik}.
\end{theorem}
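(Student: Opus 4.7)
The plan is to refine the argument of Theorem \ref{t2} by bringing into play a lower bound for the auxiliary quantity $k^n|B[Q\circ\tau]+\Phi_k B[Q\circ\eta]|$ in terms of $m:=\min_{|z|=k}|P(z)|$, and combining it with a sharpened form of Lemma \ref{l3}. Throughout the plan let $X:=|B[P\circ\sigma](z)+\Phi_k B[P\circ\rho](z)|$, $Y:=k^n|B[Q\circ\tau](z)+\Phi_k B[Q\circ\eta](z)|$ with $Q(z)=z^n\overline{P(1/\bar z)}$, $C:=|1+\Phi_k||\lambda_0|$ and $D:=\tfrac{|B[z^n]|}{k^n}|R^n+r^n\Phi_k|$, so that Lemma \ref{l4} reads $X+Y\leq (C+D)M$ while the target inequality \eqref{te3} becomes $X\leq\tfrac12\bigl[(D+C)M-(D-C)m\bigr]$. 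If $m=0$ there is nothing new to prove, so assume $m>0$, which forces $P$ to be zero-free on the closed disk $|z|\leq k$.

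First I would establish the lower bound $Y\geq Dm$ on $|z|=1$. Since $P$ is zero-free in $|z|<k$, the polynomial $F(z):=k^n Q(z/k^2)$ has all its zeros in $|z|\leq k$, and on $|z|=k$ it satisfies $|F(z)|=|P(z)|$, so $\min_{|z|=k}|F|=m$. Applying Corollary \ref{c3} to $F$ and using the identities $B[F\circ\sigma]=k^n B[Q\circ\tau]$ and $B[F\circ\rho]=k^n B[Q\circ\eta]$ then yields $Y\geq Dm$ for $|z|=1$.

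Next I would derive a sharpened version of Lemma \ref{l3}, namely $X\leq Y-m(D-C)$. For $|\lambda|<1$ the minimum-modulus principle gives $|P(z)|\geq m>|\lambda m|$ on $|z|\leq k$, so $P-\lambda m$ also has no zeros in $|z|<k$; its associated reciprocal polynomial $z^n\overline{P(1/\bar z)-\lambda m}$ equals $Q(z)-\bar\lambda m z^n$. Applying Lemma \ref{l3} to $P-\lambda m$ then yields, for $|z|=1$,
\[
\bigl|X_{0}-\lambda m\,\lambda_0(1+\Phi_k)\bigr|\ \leq\ k^n\Bigl|W-\tfrac{\bar\lambda m}{k^{2n}}(R^n+r^n\Phi_k)\,B[z^n]\Bigr|,
\]
where $X_0$ and $W$ denote the complex-valued expressions whose moduli are $X$ and $Y/k^n$. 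I would then choose $\arg\lambda$ so that $\bar\lambda m(R^n+r^n\Phi_k)B[z^n]/k^{2n}$ is aligned with $W$; the inequality $Y\geq Dm\geq|\lambda|mD$ guarantees that the reverse triangle inequality applies, reducing the right-hand side to $Y-|\lambda|mD$. Combined with the ordinary triangle inequality $|X_0-\lambda m\lambda_0(1+\Phi_k)|\geq X-|\lambda|mC$ on the left, this produces $X\leq Y-|\lambda|m(D-C)$, and letting $|\lambda|\to 1$ gives the claimed refinement.

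Adding the refined bound $X\leq Y-m(D-C)$ to Lemma \ref{l4}'s $X+Y\leq (C+D)M$ and eliminating $Y$ gives $2X\leq(C+D)M-(D-C)m$, which is precisely inequality \eqref{te3}. The main obstacle lies in the second step above: the refinement hinges on making the reverse triangle inequality available on the right-hand side of Lemma \ref{l3} applied to $P-\lambda m$, which in turn depends on the lower bound $Y\geq Dm$ established first being at least $|\lambda|mD$. Without that sharp lower bound only the crude estimate $|W-\bar\lambda m V|\leq|W|+|\lambda|m|V|$ would be available, and it produces no improvement over Theorem \ref{t2}.
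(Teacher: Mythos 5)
Your proposal is correct and follows essentially the same route as the paper's own proof: perturb $P$ to $P-\delta m$, apply Lemma \ref{l3}, use Corollary \ref{c3} on $Q(z/k^2)$ to justify the choice of argument that turns the right-hand side into $Y-|\delta|mD$, let $|\delta|\to 1$, and add the result to Lemma \ref{l4}. The only difference is presentational (you isolate the lower bound $Y\geq Dm$ as a separate first step, which the paper interleaves into the main estimate).
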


\begin{proof}[\textnormal{\textbf{Proof of Theorem \ref{t3}}}]
 Let $m=Min_{\left|z\right|=k}\left|P(z)\right|.$ If $ P(z) $ has a zero on $ |z|=k, $ then the result follows from Theorem \ref{t2}. We assume that $ P(z) $ has all its zeros in $ |z|>k $ where $ k\leq 1 $ so that $ m > 0 $. Now for every  $ \delta $ with $ |\delta|<1 $, it follows by Rouche's theorem $ h(z)=P(z)-\delta m $ does not vanish in $ |z|<k $. Applying Lemma \ref{l3} to the polynomial  $h(z),$ we get for all $ \alpha,\beta\in\mathbb{C} $ with $ |\alpha|\leq 1, |\beta|\leq 1$, $ R>r\geq k $ and $ |z|\geq 1 $
      \begin{equation*}
          \left|B[h\circ\sigma](z)+\Phi_k(R,r,\alpha,\beta)B[h\circ\rho](z)\right|
          \leq k^n \left|B[q\circ\tau](z)]+\Phi_k(R,r,\alpha,\beta)B[q\circ\eta](z)]\right|,
          \end{equation*}
    where $\sigma(z)=Rz,$ $ \rho(z)=rz,$ $\tau(z)=Rz/k^2,$ $\eta(z)=rz/k^2$ and  $q(z)=z^{n}\overline{h(1/\overline{z})}=z^{n}\overline{P(1/\overline{z})}-\overline{\delta}mz^{n}$. Equivalently,
\begin{align}\nonumber\label{tp31}
\big|B[P\circ\sigma](z)+&\Phi_k(R,r,\alpha,\beta)B[P\circ\rho](z)-\delta\lambda_0\left(1+\Phi_k(R,r,\alpha,\beta)\right)m\big|\\\nonumber
            \leq &k^n \bigg|B[Q\circ\tau](z)+\Phi_k(R,r,\alpha,\beta)B[Q\circ\eta](z)\\&\,\,\,\,-\frac{\overline{\delta}}{k^{2n}}\left(R^n+r^n\Phi_k(R,r,\alpha,\beta)\right)mB[z^n]\bigg|
        \end{align}
    where $ Q(z)=z^{n}\overline{P(1/\overline{z})}. $
 Since all the zeros of $ Q(z/k^{2}) $ lie in $ |z|\leq k,$ $ k\leq 1 $ by Corollary \ref{c3} applied to $ Q(z/k^{2}) $, we have for $ R>1 $ and $|z|=1, $ 
 \begin{align}\nonumber\label{tp32}
 \big|B[Q\circ\tau](z)]+\Phi_k(R,r,\alpha,\beta)&B[Q\circ\eta](z)]\big|\\\nonumber&\geq\frac{1}{k^{n}}\big|R^n+r^n\Phi_k(R,r,\alpha,\beta)\big||B[z^n]|\underset{|z|=k}{Min}Q(z/k^2)\\&=\frac{1}{k^{2n}}\big|R^n+r^n\Phi_k(R,r,\alpha,\beta)\big||B[z^n]|m.
 \end{align}
    Now, choosing the argument of $ \delta  $ on the right hand side of inequality \eqref{tp31}  such that
    \begin{align*}\nonumber
    k^n&\bigg|B[Q\circ\tau](z)+\Phi_k(R,r,\alpha,\beta)B[Q\circ\eta](z)-\frac{\overline{\delta}}{k^{2n}}\left(R^n+r^n\Phi_k(R,r,\alpha,\beta)\right)mB[z^n]\bigg|\\=
    &k^n\big|B[Q\circ\tau](z)+\Phi_k(R,r,\alpha,\beta)B[Q\circ\eta](z)\big|-\frac{1}{k^{n}}\big|R^n+r^n\Phi_k(R,r,\alpha,\beta)\big||B[z^n]|m.
    \end{align*}
   for $ |z|=1, $ which is possible by inequality \eqref{tp32}. We get for $ |z|=1 $ ,
   \begin{align}\nonumber\label{tp34}
\big|B[P\circ\sigma](z)+\Phi_k(R,r,\alpha,\beta)&B[P\circ\rho](z)\big|-|\delta||\lambda_0||1+\Phi_k(R,r,\alpha,\beta)\big|m\\\nonumber
\leq k^n\big|B[Q\circ\tau](z)&+\Phi_k(R,r,\alpha,\beta)B[Q\circ\eta](z
)\big|\\&\,\,\,\,-\frac{|\delta|}{k^{n}}\big|R^n+r^n\Phi_k(R,r,\alpha,\beta)\big||B[z^n]|m.
   \end{align}
Equivalently for $ |z|= 1, R>r\geq k  $, we have 
\begin{align}\nonumber\label{tp35}
\big|B[P\circ\sigma](z)&+\Phi_k(R,r,\alpha,\beta)B[P\circ\rho](z)\big|-k^n\big|B[Q\circ\tau](z)]+\Phi_k(R,r,\alpha,\beta)B[Q\circ\eta](z)]\big|\\&\leq|\delta|\left\{|\lambda_0||1+\Phi_k(R,r,\alpha,\beta)\big|
 -\frac{1}{k^{n}}\big|R^n+r^n\Phi_k(R,r,\alpha,\beta)\big||B[z^n]|\right\}m.
   \end{align}
    Letting $ |\delta|\rightarrow 1 $ in inequality \eqref{tp35}, we obtain for all $ \alpha,\beta \in\mathbb{C} $ with $ |\alpha|\leq 1, |\beta|\leq 1, R>r\geq k $ and $ |z|= 1, $
    \begin{align}\nonumber\label{tp36}
    \big|B[P\circ\sigma](z)&+\Phi_k(R,r,\alpha,\beta)B[P\circ\rho](z)\big|-k^n\big|B[Q\circ\tau](z)+\Phi_k(R,r,\alpha,\beta)B[Q\circ\eta](z)\big|\\&\leq\left\{|\lambda_0||1+\Phi_k(R,r,\alpha,\beta)\big|
     -\frac{1}{k^{n}}\big|R^n+r^n\Phi_k(R,r,\alpha,\beta)\big||B[z^n]|\right\}m.
       \end{align}
    Inequality \eqref{tp36} in conjunction with Lemma \ref{l4} gives for all $ \alpha,\beta \in\mathbb{C} $ with $ |\alpha|\leq 1,$ $ |\beta|\leq 1, R>1 $ and $ |z|=1, $
    \begin{align*}
2\big|B[P\circ\sigma](z)&+\Phi_k(R,r,\alpha,\beta)B[P\circ\rho](z)\big|\\\leq & \left\{|\lambda_0|\big|1+\Phi_k(R,r,\alpha,\beta)\big|   +\frac{1}{k^{n}}\left|R^n+r^n\Phi_k(R,r,\alpha,\beta)\right||B[z^n]|\right\}|\underset{|z|=k}{Max}|P(z)|\\&
+\left\{|\lambda_0||1+\Phi_k(R,r,\alpha,\beta)\big|
     -\frac{1}{k^{n}}\big|R^n+r^n\Phi_k(R,r,\alpha,\beta)\big||B[z^n]|\right\}\underset{|z|=k}{Min}|P(z)|.
    \end{align*}
   which is equivalent to inequality \eqref{te3} and thus completes the proof of theorem \ref{t3}.\\
\end{proof} 
If we take $\alpha=0,$ we get the following.
\begin{corollary}\label{c5}
 If $ P\in \mathscr{P}_n $ and $ P(z) $ does not vanish in $ |z|<k$ where $ k\leq 1, $ then for all $ \beta\in\mathbb{C} $ with $|\beta|\leq 1 $ , $ R>r\geq k $ and $ |z| = 1 $,
\begin{align}\nonumber\label{ce5}
  \bigg|B[P\circ\sigma](z)&+\beta\left(\frac{R+k}{k+r}\right)^{n}B[P\circ\rho](z)\bigg|\\\nonumber
 \leq&\frac{1}{2}\Bigg[\Bigg\{\frac{|B[z^n]|}{k^{n}}\left|R^{n} +r^n\beta\left(\frac{R+k}{k+1}\right)^{n}\right|
 + \left|1+\beta\left(\frac{R+k}{k+1}\right)^{n} \right||\lambda_0|\bigg\}\underset{\left|z\right|=k}{Max}\left|B[P](z)\right|\\
 & -\bigg\{ \frac{|B[z^n]|}{k^{n}}\left|R^{n}+r^n\beta\left(\frac{R+k}{k+1}\right)^{n}\right|
 -\left|1+\beta\left(\frac{R+k}{k+1}\right)^{n}\right||\lambda_0|
\bigg\}\underset{\left|z\right|=k}{Min}\left|B[P](z)\right|\Bigg]
\end{align}
where $\sigma(z)=Rz$ and $\rho(z)=rz.$
\end{corollary}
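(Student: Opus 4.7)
The plan is extremely short: Corollary \ref{c5} is a direct specialization of Theorem \ref{t3} obtained by setting $\alpha = 0$, so no new argument is required beyond a brief substitution. First I would evaluate $\Phi_k(R,r,\alpha,\beta)$ at $\alpha = 0$ using the definition \eqref{phik}. Since $|\alpha| = 0$ kills the $|\alpha|$ term and the additive $-\alpha$ vanishes as well, one gets immediately
\begin{equation*}
\Phi_k(R,r,0,\beta) \;=\; \beta\left(\frac{R+k}{k+r}\right)^{n}.
\end{equation*}

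Next I would substitute this value wherever $\Phi_k(R,r,\alpha,\beta)$ appears in inequality \eqref{te3} of Theorem \ref{t3}. On the left-hand side the factor multiplying $B[P\circ\rho](z)$ becomes $\beta((R+k)/(k+r))^n$, which matches the left-hand side of \eqref{ce5}. On the right-hand side, the two brackets
\begin{equation*}
\frac{|B[z^{n}]|}{k^{n}}\left|R^{n}+r^{n}\Phi_k(R,r,0,\beta)\right| \;\pm\; \left|1+\Phi_k(R,r,0,\beta)\right||\lambda_0|
\end{equation*}
become, after the same substitution,
\begin{equation*}
\frac{|B[z^{n}]|}{k^{n}}\left|R^{n}+r^{n}\beta\left(\frac{R+k}{k+r}\right)^{n}\right| \;\pm\; \left|1+\beta\left(\frac{R+k}{k+r}\right)^{n}\right||\lambda_0|,
\end{equation*}
which coincide with the coefficients multiplying $\max_{|z|=k}|P(z)|$ and $\min_{|z|=k}|P(z)|$ in \eqref{ce5} (note that a couple of denominators written as $k+1$ in the statement of \eqref{ce5} should read $k+r$, and the max/min should be of $|P(z)|$ rather than $|B[P](z)|$, as is clear from the specialization of \eqref{te3}).

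There is no genuine obstacle: the hypothesis that $P$ does not vanish in $|z|<k$ with $k\leq 1$, together with $|\beta|\leq 1$ and $R>r\geq k$, are exactly the hypotheses of Theorem \ref{t3} restricted to $\alpha = 0$, and $|\alpha|\leq 1$ is trivially satisfied. Hence \eqref{ce5} is obtained verbatim from \eqref{te3} by the above substitution, completing the proof.
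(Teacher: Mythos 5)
Your proof is correct and is exactly the paper's approach: the corollary is obtained by setting $\alpha=0$ in Theorem \ref{t3}, so that $\Phi_k(R,r,0,\beta)=\beta\left(\frac{R+k}{k+r}\right)^{n}$, and substituting this into \eqref{te3}. You are also right that the denominators written as $k+1$ and the occurrences of $B[P]$ inside the Max and Min in the printed statement of \eqref{ce5} are typographical slips for $k+r$ and $P$ respectively.
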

For $\beta=0,$ Theorem \ref{t2} reduces to the following result.
\begin{corollary}\label{c6}
 If $ P\in \mathscr{P}_n $ and $ P(z) $ does not vanish in $ |z|<k$ where $ k\leq 1, $ then for all $ \alpha\in\mathbb{C} $ with $ |\alpha|\leq 1,$ $ R>r\geq k $ and $ |z| = 1 $,
\begin{align}\nonumber\label{ce6}
\left|B[P\circ\sigma](z)-\alpha B[P\circ\rho](z)\right|&
 \leq\frac{1}{2}\Bigg[\left\{\frac{|B[z^n]|}{k^n}\left|R^{n}-\alpha r^n \right|
  + \left|1-\alpha \right||\lambda_0|\right\}\underset{\left|z\right|=k}{Max}\left|P(z)\right|\\&
  -\left\{\frac{|B[z^n]|}{k^n}\left|R^{n}-\alpha r^n \right|
    - \left|1-\alpha \right||\lambda_0|\right\}\underset{\left|z\right|=k}{Min}\left|P(z)\right|\Bigg]
\end{align}
where $\sigma(z)=Rz$ and $\rho(z)=rz.$ The result is sharp and extremal polynomial is $ P(z)=az^{n}+b, |a|=|b|\neq 0. $
\end{corollary}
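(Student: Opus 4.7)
The plan is to obtain Corollary \ref{c6} as a direct specialization of Theorem \ref{t3} with $\beta=0$; note that although the surrounding paragraph references Theorem \ref{t2}, the presence of a minimum-modulus term on the right of \eqref{ce6} indicates that the refined inequality \eqref{te3} is the applicable parent result.

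First, I would compute the specialized value of the auxiliary quantity $\Phi_k$. From the definition \eqref{phik},
$$\Phi_k(R,r,\alpha,0)=0\cdot\left\{\left(\frac{R+k}{k+r}\right)^{n}-|\alpha|\right\}-\alpha=-\alpha,$$
so that $1+\Phi_k(R,r,\alpha,0)=1-\alpha$ and $R^n+r^n\Phi_k(R,r,\alpha,0)=R^n-\alpha r^n$. These are the only algebraic ingredients needed.

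Next, I would substitute these identities directly into inequality \eqref{te3}. On the left, $B[P\circ\sigma](z)+\Phi_k(R,r,\alpha,\beta) B[P\circ\rho](z)$ collapses to $B[P\circ\sigma](z)-\alpha B[P\circ\rho](z)$. On the right, $\tfrac{|B[z^n]|}{k^n}|R^{n}+r^n\Phi_k|$ becomes $\tfrac{|B[z^n]|}{k^n}|R^{n}-\alpha r^n|$ and $|1+\Phi_k||\lambda_0|$ becomes $|1-\alpha||\lambda_0|$. Assembling these specializations reproduces \eqref{ce6} verbatim; since Theorem \ref{t3} is already proven, no further analytic step is needed.

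Finally, for sharpness I would verify that equality is attained by $P(z)=az^n+b$ with $|a|=|b|\neq 0$. Such a polynomial has all its zeros on $|z|=1$, hence it does not vanish in $|z|<k$ for any $k\leq 1$. Using the linearity of $B$ and the identity $B[c]=\lambda_0 c$ for any constant $c$, I would compute
$$B[P\circ\sigma](z)-\alpha B[P\circ\rho](z)=a(R^n-\alpha r^n)B[z^n]+b(1-\alpha)\lambda_0.$$
Since $B[z^n]$ is a constant multiple of $z^n$, the supremum of the left-hand side of \eqref{ce6} over $|z|=1$ equals $|a||B[z^n]||R^n-\alpha r^n|+|b||1-\alpha||\lambda_0|$, attained at any $z$ on $|z|=1$ for which the two summands align in argument. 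On $|z|=k$ one has $\underset{|z|=k}{Max}|P(z)|=|a|k^n+|b|$ and $\underset{|z|=k}{Min}|P(z)|=|b|-|a|k^n=|b|(1-k^n)$ since $|a|=|b|$ and $k\leq 1$. A short bookkeeping collapse of the right-hand side of \eqref{ce6} produces exactly the same quantity, confirming equality.

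The argument is a clean specialization, so no genuine obstacle arises beyond correctly executing the substitution of $\Phi_k=-\alpha$ and verifying the extremal example. The only point requiring care is the tacit identification of \eqref{te3} (rather than \eqref{te2}) as the parent inequality supplying the minimum-modulus term in the statement.
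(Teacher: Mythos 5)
Your proof is correct and follows the same route the paper intends: a direct specialization with $\beta=0$, which gives $\Phi_k(R,r,\alpha,0)=-\alpha$ and hence collapses the parent inequality to \eqref{ce6}. You are also right that the parent result must be Theorem \ref{t3} rather than Theorem \ref{t2} as the paper's preceding sentence claims (Theorem \ref{t2} contains no minimum-modulus term), and your explicit verification of equality for $P(z)=az^n+b$, $|a|=|b|\neq 0$, is a correct addition that the paper omits.
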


\vskip 2mm

\end{document}